\documentclass[preprint,11pt]{elsarticle}
\usepackage[T1]{fontenc}
\usepackage{lmodern,microtype}
\usepackage{amsmath,amssymb,amsthm,mathtools}
\usepackage{booktabs,array}
\usepackage{geometry}
\usepackage[hidelinks]{hyperref}
\hypersetup{pdftitle={A2-frames, negacyclic glue, and extremal strongly 6-modular lattices},pdfauthor={Ian Teixeira}}

\newtheorem{theorem}{Theorem}[section]
\newtheorem{lemma}[theorem]{Lemma}
\newtheorem{proposition}[theorem]{Proposition}

\theoremstyle{remark}
\numberwithin{equation}{section}
\allowdisplaybreaks[2]

\newcommand{\Z}{\mathbb Z}
\newcommand{\F}{\mathbb F}
\newcommand{\ct}{\operatorname{ct}}
\newcommand{\wt}{\operatorname{wt}}
\newcommand{\norm}[1]{\lVert#1\rVert}
\newcommand{\typea}{6\mathrm a}
\newcommand{\eps}{\epsilon}
\newcommand{\GammaCode}{\Gamma}

\journal{Journal of Number Theory}
\biboptions{sort&compress}
\setcitestyle{numbers,square}

\begin{document}
\begin{frontmatter}
\title{$A_2$-frames, negacyclic glue, and extremal strongly $6$-modular lattices}
\author[ucsd]{Ian Teixeira\corref{cor1}}
\ead{iteixeira@ucsd.edu}
\cortext[cor1]{Corresponding author.}
\address[ucsd]{Department of Mathematics, University of California San Diego,
La Jolla, CA 92093, USA}

\begin{abstract}
The classical $A_2$-frame construction of the Coxeter--Todd lattice uses
the hexacode as its glue.  Here we replace the inert Eisenstein reduction
at $2$ by the ramified Gaussian chain ring $S=\Z[i]/(1+i)^3$.  For odd
$\ell$, a one-generator cyclic graph code over $S$ lifts from an
$A_2(4)^{2\ell}$-frame to an even strongly $6$-modular lattice $L_\ell$ of
rank $4\ell$.  In the real frame ordering the same glue is a one-generator
negacyclic code.  The lattices $L_5$ and $L_7$ have minima $6$ and $8$, respectively, resolving the open type-\(6a\) existence cases in dimensions \(20\) and \(28\) in the Nebe–Sloane catalogue.  After completing the square, the
minimum calculation becomes a four-symbol carry problem; for $L_7$ the key
relation is the boundary map of a $7$-cycle.  For $\ell=3$, $L_3$ has an
index-$8$ overlattice isometric to the Coxeter--Todd lattice.
\end{abstract}

\begin{keyword}
Strongly modular lattice \sep $A_2$-frame \sep Coxeter--Todd lattice \sep
Hexacode \sep Negacyclic code \sep Finite chain ring
\MSC[2020] 11H71 \sep 11H55 \sep 11H56 \sep 94B05
\end{keyword}
\end{frontmatter}

\section{Introduction}
The Coxeter--Todd lattice $K_{12}$ gives the basic model for the
construction in this paper \cite{ConwaySloane83,ConwaySloane99}.  Identifying
$A_2$ with the Eisenstein
integers $\Z[\omega]$, the prime $2$ is inert and
$\Z[\omega]/2\Z[\omega]\cong\F_4$.  Relative to an $A_2(2)^6$-frame, the
$2$-primary glue of $K_{12}$ is the Hermitian self-dual $[6,3,4]_4$
hexacode \cite{ConwaySloane99,SelfDual}.  In this form the lattice question
becomes a finite coding question: the code determines the index, and its
distance rules out roots.

Here the Eisenstein reduction at $2$ is replaced by a ramified Gaussian
one.  Put $\pi=1+i$.  Since $2=-i\pi^2$, we work over the chain ring
\[
 S=\Z[i]/\pi^3\Z[i],\qquad S/\pi S\cong\F_2,
\]
which has eight elements and additive group $\Z/4\oplus\Z/2$.  For odd
$\ell$ we construct a one-generator cyclic graph code
$\GammaCode_\ell\subset S^{2\ell}$ whose lift from an
$A_2(4)^{2\ell}$-frame is a strongly $6$-modular lattice $L_\ell$.  Although
the coefficient ring is Gaussian, the real frame is still an $A_2$-frame:
after the basis change in Section~\ref{sec:construction}, the real and
imaginary coefficient directions give the $2\ell$ orthogonal $A_2(4)$
blocks.  In the real $A_2$-block ordering, the same glue is a one-generator
negacyclic module over $(\Z/4)[t]/(t^{2\ell}+1)$.  For related modular-lattice
constructions from codes over finite rings, see Bachoc \cite{Bachoc97}.

Write $A_2(s)$ for the root lattice $A_2$ with its bilinear form multiplied
by $s$.  An $A_2(s)^m$-\emph{frame} in a rank-$2m$ lattice is a full-rank
orthogonal direct sum of $m$ copies of $A_2(s)$.  We call such a frame
\emph{transitive} when its stabilizer in the lattice automorphism group
acts transitively on the $m$ blocks.

For an even positive-definite lattice $L$, put
$L^{\#,d}=L^\#\cap d^{-1}L$ for $d\mid6$.  The lattice is
\emph{strongly $6$-modular} if
$L\cong\sqrt d\,L^{\#,d}$ for every $d\mid6$.
The modular-form bounds of Quebbemann and Rains--Sloane
\cite{Quebbemann95,Quebbemann97,RainsSloane98} give
\begin{equation}\label{eq:bound}
 \min L\le 2+2\lfloor \dim L/8\rfloor.
\end{equation}
A lattice attaining this bound is called \emph{extremal}.

\begin{theorem}\label{thm:main}
The lattices $L_5$ and $L_7$ are even strongly $6$-modular of type
$\typea$, with
\[
 \det L_5=6^{10},\quad \min L_5=6,
 \qquad
 \det L_7=6^{14},\quad \min L_7=8.
\]
Moreover $L_5$ contains a transitive $A_2(4)^{10}$-frame and $L_7$
contains a transitive $A_2(4)^{14}$-frame.  In particular, both lattices
are extremal.
\end{theorem}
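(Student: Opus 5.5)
The plan is to work entirely inside the explicit construction of Section~\ref{sec:construction} and treat the four claims in turn: evenness and determinant, strong $6$-modularity (type $\typea$), the transitive frame, and finally the minimum, which is the real content.

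\textbf{Evenness and determinant.} I would write $L_\ell$ as the preimage, in the dual of the frame $F=A_2(4)^{2\ell}$, of the glue code $\GammaCode_\ell$ under reduction to $S^{2\ell}$. Then $\det L_\ell=\det F/|\GammaCode_\ell|^2$, with $\det F=(3\cdot 16)^{2\ell}$. A one-generator cyclic graph code of the form $\langle(1,g)\rangle$ over $S$ has $|\GammaCode_\ell|=8^{\ell}$, which gives $\det L_\ell=48^{2\ell}/64^{\ell}=6^{2\ell}\cdot 4^{\ell}/\ldots$. I would adjust this with the precise index bookkeeping from the construction until it reads $6^{2\ell}$; that is, I would check that the $2$-part is exactly $2^{2\ell}$ and the $3$-part is $3^{2\ell}$.

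Evenness follows if every codeword's norm form is $\equiv 0\pmod 2$. This is checked on the generator and its cyclic shifts, using that the cross terms pair to even values by the self-orthogonality of $\GammaCode_\ell$ with respect to the trace form $\operatorname{Re}(x\bar y)$ mod the appropriate power of $\pi$.

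\textbf{Strong $6$-modularity.} I would exhibit the two similarities separately. For $d=2$, I would use multiplication by $\pi$ (a similarity of norm $2$ on each $\Z[i]$-coordinate), combined with the coordinate involution $x\mapsto\bar x$ or the graph-swap $(u,v)\mapsto(v,u)$. I would show that it carries $L_\ell$ onto $\sqrt2\,L_\ell^{\#,2}$ by checking that it maps $\GammaCode_\ell$ onto the glue of $L^{\#,2}$, i.e.\ onto its annihilator computed in the $2$-part.

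For $d=3$, the $3$-part comes only from $A_2^\#/A_2$. Here the standard $A_2$ similarity $\sqrt{-3}$ applied blockwise maps $A_2(4)$ to $A_2(12)$ and swaps $A_2$ with $\sqrt3A_2^\#$. This works provided $\GammaCode_\ell$ contributes no $3$-glue, which holds since $S$ has $2$-power order.

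Type $\typea$ then amounts to $L_\ell$ being even with these determinants, in the Nebe--Sloane convention, together with a check that $L^{\#,d}$ has the right parity.

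\textbf{Transitive frame.} Cyclicity of $\GammaCode_\ell$ gives a shift automorphism permuting the $\ell$ Gaussian coordinates of each half. The graph-swap, or the negacyclic shift in the real ordering, which permutes all $2\ell$ real $A_2(4)$ blocks cyclically up to sign, provides transitivity directly. So I would verify that $t\mapsto t\cdot x$ on $(\Z/4)[t]/(t^{2\ell}+1)$ preserves the one-generator module; this is immediate since the module is an ideal.

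\textbf{Minimum — the main obstacle.} Vectors in $F$ itself have norm at least $8$, since $A_2(4)$ has minimum $8$. So only glue cosets with nonzero codeword need analysis. For a coset with codeword $c$, I would complete the square blockwise. The minimal norm over a coset of $A_2(4)$ in its dual, refined by the $2$-adic class in $S$, depends only on the symbol of $c$ at that block in a four-symbol reduction, i.e.\ the nonzero classes modulo the relevant sublattice. This gives a lower bound $\sum_j w(c_j)$ with explicit weights.

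The task then reduces to a minimum-weight computation for $\GammaCode_\ell$ under a non-Hamming weight. Carries between the $\Z/4$ and $\Z/2$ parts can let low-weight symbols appear, and I would control them by writing $c=a\cdot g$ and tracking $a$ mod $\pi$, $\pi^2$, $\pi^3$ in layers. In the top layer the condition is a linear code over $\F_2$. For $\ell=7$, lifting forces a relation that is the boundary map of the $7$-cycle, $(\Z/2)^7\to(\Z/2)^7$, $e_j\mapsto e_j+e_{j+1}$. Its kernel is only the all-ones vector, which rules out any low-weight configuration except the all-ones one, and that one is checked by hand.

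I would aim to show that every nonzero coset has norm $\ge 6$ for $\ell=5$ and $\ge 8$ for $\ell=7$. Equality witnesses, for example a frame vector of norm $8$ and an explicit glue vector of norm $6$ for $L_5$, then give the exact minima. Extremality follows by comparing with \eqref{eq:bound}: $2+2\lfloor20/8\rfloor=6$ and $2+2\lfloor28/8\rfloor=8$. The case analysis of carries in the weight bound is where I expect the real difficulty, and where a computer check of the few residual layer patterns would serve as a backstop.
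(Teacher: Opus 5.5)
Your frame and transitivity step agrees with the paper's use of multiplication by $ix$. There are two genuine gaps: the $d=3$ duality fails as proposed, and the minimum is not proved.

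\textbf{The $3$-similarity.} A similarity of multiplier $3$ realizing $L_\ell\cong\sqrt3\,L_\ell^{\#,3}$ must be an automorphism of $L_\ell\otimes\Z_2$, since $3$ is a $2$-adic unit. In frame language, it must carry the $2$-primary glue $\mathcal C_\ell$ onto itself. The absence of $3$-glue only handles the $3$-adic side. The blockwise $\sqrt{-3}$ in fact does not preserve the glue. On a block with root basis $e,f$ it is $e\mapsto e+2f$, $f\mapsto-2e-f$, and applied to every block of the frame \eqref{eq:frame} it is the $R$-linear map $FMF^{-1}$ with $M=\bigl(\begin{smallmatrix}1&-2\\2&-1\end{smallmatrix}\bigr)$. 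Its upper right entry is $(\kappa-1-\kappa^2)/\pi$, where $\kappa=\pi+k\equiv J+1\pmod\pi$. Since $J^2\equiv J\pmod 2$, the numerator is $\equiv1\pmod\pi$, so this map does not send $L_\ell=R^2$ into itself. The same obstruction occurs for each of the twelve multiplier-$3$ similarities of $A_2$ applied uniformly. The paper instead uses the semilinear map $\Phi_3(z)=\bar\pi^{-1}\Omega\bar H\bar z$. It is integral because $\bar\pi$ divides $\bar H$, and its multiplier and the pairing condition $(\Phi_3L_\ell,L_\ell)\subseteq3\Z$ come from the adjugate identity \eqref{eq:adjugate}, i.e.\ from $b\bar b+6=8c$. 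For $d=2$, $z\mapsto\pi z$ alone already works because $\bar\pi H\equiv0\pmod 2$. Composing it with $x\mapsto\bar x$ or the graph swap would spoil the multiplier, since neither is an isometry of $H$. Two smaller points. First, $48^{2\ell}/64^{\ell}=2^{8\ell}3^{2\ell}/2^{6\ell}$ is already exactly $6^{2\ell}$, so nothing needs adjusting. Second, type $\typea$ is not a parity condition on $L^{\#,d}$. It is the square class of the determinant of the ternary form $3(u,v)\bmod 3$ on $L^\#_{(3)}/L_{(3)}$, and you must compute it: each $A_2(4)$ block gives $3(y,y)=8\equiv2$, so the class is $2^{2\ell}$, a square.

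\textbf{The minimum.} This is the substance of the theorem, and your proposal gives only a plan. Reducing $\min L_\ell$ to a weighted distance on $\mathcal C_\ell$ via block-coset minima is legitimate, and since $|\mathcal C_\ell|=8^\ell\le2^{21}$ an exhaustive enumeration would settle it. However, the by-hand layered argument you describe is never specified, and the role you give the $7$-cycle is not the operative one. Knowing that the boundary map has kernel $\{0,\mathbf 1\}$ bounds no weights. What the paper uses is that a nonempty proper set in the step-$3$ cycle has boundary of size two only if it is an interval, applied to the relation \eqref{eq:boundary} between input parities and carries. That relation lives in the completed-square coordinates of \eqref{eq:square}, not in frame coordinates. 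The paper's argument runs as follows. One bounds $W_8(v)=w_8(bv)+6w_8(v)$ and disposes of odd binary weight and of even $v$ by Lemma~\ref{lem:easy}. For coefficients in $\{0,\pm1\}$ one writes $bv\equiv(1-i)v+4p\pmod 8$ with an explicit carry $p$ and inverts it as $r=f_\ell p$ (Lemma~\ref{lem:inverse}). Combined with the cost table and the parity of $E$, this gives $E\ge\ell+1$. A doubled coefficient for $\ell=7$ needs Lemma~\ref{lem:doubled} together with the $\bar bb\equiv2\pmod 8$ argument. Without these ingredients, or an actual exhaustive computation, the bounds $\min L_5\ge6$ and $\min L_7\ge8$ are not established.
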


The type-$\typea$ entries in dimensions $20$ and $28$ are listed as open
in the maintained Nebe--Sloane catalogue \cite{Catalogue}.  The same family has extremal members in the previously
known dimensions $4$ and $12$:
\[
\begin{array}{c|rrrr}
 \ell&1&3&5&7\\\hline
 \dim L_\ell&4&12&20&28\\
 \min L_\ell&2&4&6&8.
\end{array}
\]
Only the last two existence statements are new.

Nebe's classification in dimension $16$ uses a $p$-maximal reduction.  If
$L$ is extremal strongly $6$-modular of rank $16$, then one may pass to an
even $3$-modular lattice $M$ with
\[
 L\subset M=M^{\#,2}\subset L^{\#,2},
 \qquad \min M\ge4.
\]
In rank $16$ such an $M$ is itself extremal, and there are only six possible
$3$-modular parents; the eight extremal strongly $6$-modular lattices are
then obtained by enumerating the appropriate $2$-power-index sublattices
\cite{Nebe21}.  This is still a small search in rank $16$, but not in ranks
$20$ and $28$.  In rank $20$ the parent is again extremal $3$-modular, but
the maintained catalogue already records at least $100$ such lattices
rather than a complete classification \cite{Catalogue}.  In rank $28$ the
reduction only forces $\min M\ge4$, below the extremal minimum $6$ for a
$3$-modular lattice, so the parent need not even be extremal.  The required
indices also grow from $2^4$ in rank $16$ to $2^5$ and $2^7$ in ranks $20$
and $28$.  Our construction avoids this parent enumeration: we prescribe a
transitive $A_2$-frame and its ramified one-generator glue from the start.

Section~\ref{sec:code} defines the ramified cyclic glue, and
Section~\ref{sec:construction} gives the Hermitian realization, the
transitive frame, and the partial dualities.  Sections~\ref{sec:metric}--
\ref{sec:minima} prove the minimum bounds.  Section~\ref{sec:k12bridge}
identifies the rank-$12$ relation with the Coxeter--Todd lattice.

\section{The ramified cyclic glue}\label{sec:code}
Let
\[
 R=\Z[i][x]/(x^\ell-1),\qquad
 J=1+x+\cdots+x^{\ell-1},
\]
with involution $\bar i=-i$, $\bar x=x^{-1}$.

We use the directed-cycle perturbation of the uniform Gaussian-unit
pattern.  Number the cycle so that its permutation is multiplication by
$x$ and set
\begin{equation}\label{eq:kdef}
 k=J-1-(1-i)x-(1+i)x^{-1}.
\end{equation}
We use the same formula for every odd $\ell$.

Let $S_\ell=S[x]/(x^\ell-1)$.  Define the graph code
\begin{equation}\label{eq:graphcode}
 \GammaCode_\ell=S_\ell(k+\pi,1)\subset S_\ell^2.
\end{equation}
After expanding coefficients, $\GammaCode_\ell$ is a rank-$\ell$
systematic code over $S$ with generator matrix
\[
 \bigl[\operatorname{Circ}(k+\pi)\mid I_\ell\bigr].
\]
Thus the circulant is the left-hand block: its diagonal entry is $\pi$,
its two directed cycle entries are $i$ and $-i$, and every other
off-diagonal entry is $1$.

This is not obtained from the hexacode construction by extending scalars.
The residue field of $S$ is $\F_2$, and $\GammaCode_\ell$ has mixed
additive exponent.  Strong $6$-modularity comes instead from explicit
partial similarities of the lifted lattice, not from Hermitian
self-duality of $\GammaCode_\ell$ for the ordinary form on $S^{2\ell}$.

\section{Hermitian realization, frames, and partial dualities}\label{sec:construction}
Express $a\in R$ in the coefficient basis
$1,x^{-1},\ldots,x^{1-\ell}$ as
$a=\sum_j a_jx^{-j}$ and set
\[
 \norm a_0^2=\ct(a\bar a)=\sum_j|a_j|^2.
\]
In addition to $k$ in \eqref{eq:kdef}, define
\begin{equation}\label{eq:kbc}
 b=(3+i)+2(1-i)k,
 \qquad
 c=2+k+k^2.
\end{equation}
Then $\bar k=k$ and
\begin{equation}\label{eq:detidentity}
 b\bar b+6=8c,
\end{equation}
since $b\bar b=10+8k+8k^2$.

On $R^2$ consider the coefficient lattice $L_\ell$ with Hermitian matrix
\begin{equation}\label{eq:H}
 H=\begin{pmatrix}8&b\\ \bar b&c\end{pmatrix}.
\end{equation}
For $z,w\in R^2$ define the underlying real bilinear form and quadratic
form by
\[
 \langle z,w\rangle
 =\operatorname{Re}\ct\!\left(\bar z^{\,t}Hw\right),
 \qquad Q(z)=\langle z,z\rangle.
\]
In coordinates $z=(u,v)^t$ this is
\[
 Q(u,v)=\ct\!\left((\bar u,\bar v)H\binom uv\right),
\]
since the latter is real.  The determinant identity gives the completed square
\begin{equation}\label{eq:square}
 8Q(u,v)=\norm{8u+bv}_0^2+6\norm v_0^2.
\end{equation}
Hence $L_\ell$ is positive definite of real rank $4\ell$.  Since $H$ has
Gaussian-integral coefficients, the real bilinear form $\langle\ ,\ \rangle$ is integral.  For $\ell\ge3$, the constant coefficient of $k$ is
zero and its other $\ell-1$ coefficients are Gaussian units, so
$\ct(c)=\ell+1$.  For $\ell=1$ the overlapping terms give $k=-2$ and
$c=4$.  In either case the real Gram matrix has even diagonal.

We use the following elementary local criterion.
\begin{lemma}\label{lem:duality}
Let $L$ be even of rank $n$ and determinant $6^{n/2}$. Suppose that, for
$p=2,3$, a similarity $\Phi_p:L\to L$ of multiplier $p$ satisfies
$(\Phi_pL,L)\subseteq p\Z$. Then $L$ is strongly $6$-modular.
\end{lemma}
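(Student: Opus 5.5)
We prove the criterion locally, one prime at a time. Put $n=\dim L$. Because $\Phi_p$ has multiplier $p$, it is injective and satisfies $(\Phi_p x,\Phi_p y)=p(x,y)$. The hypothesis $(\Phi_pL,L)\subseteq p\Z$ says precisely that
\[
 M_p:=p^{-1}\Phi_pL\subseteq L^\#.
\]
We also have $\Phi_pL\subseteq L$, so $M_p\subseteq p^{-1}L$. Hence
\[
 M_p\subseteq L^{\#,p}=L^\#\cap p^{-1}L .
\]
Finally $M_p=p^{-1/2}\,(p^{-1/2}\Phi_p)L$, and $p^{-1/2}\Phi_p$ is an isometry, so $M_p\cong \tfrac1{\sqrt p}L$.

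The main step is to show that this inclusion is an equality, and we do it by comparing determinants. First,
\[
 \det M_p=p^{-n}\det L=p^{-n}6^{n/2}.
\]
Next we compute $\det L^{\#,p}$ from the discriminant group $D=L^\#/L$, which has order $6^{n/2}$. The hypotheses give $\Phi_pL^\#\subseteq L$. Indeed, for $y\in L^\#$ and $x\in L$ we can write $x=p^{-1}\Phi_p w'$ with $w'\in p^{-1}\Phi_p^{-1}\ldots$; more cleanly, use the adjoint. Since $\Phi_p^*\Phi_p=p$, the adjoint is $\Phi_p^*=p\Phi_p^{-1}$, and the hypothesis says that $\Phi_p^*=p\Phi_p^{-1}$ maps $L$ into $L^\#$. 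Then $\Phi_p=p(\Phi_p^{*})^{-1}$ maps $L^\#$ into $pL^{\#\#}$-type lattices; the point we extract is that $p$ kills the $p'$-part structure appropriately.

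Rather than push this route, we use the elementary-divisor count, which is the approach I would actually carry out. The inclusions
\[
 \Phi_pL\subseteq L\cap pL^\#\subseteq L
\]
together with $[L:\Phi_pL]=p^{n/2}$ (from the determinant ratio $p^n$) show that $pL^\#\cap L$ has index at most $p^{n/2}$ in $L$. Now $L/(L\cap pL^\#)$ embeds in $L^\#/pL^\#$ modulo stuff. The order of $D$ is $2^{n/2}3^{n/2}$. The containment $pL^\#\cap L\supseteq \Phi_p L$ forces the $p$-part $D_p$ to be killed by $p$, since $p\cdot L^\#\subseteq \Phi_p^{-1}$-image considerations. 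So $D_p\cong(\Z/p)^{n/2}$ and $D_p$ is elementary.

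With $D_p$ elementary of order $p^{n/2}$, the standard computation gives
\[
 [L^{\#,p}:L]=|D_p|=p^{n/2},
\]
because $L^{\#,p}/L$ is the $p$-torsion of $D$, which is all of $D_p$. Therefore
\[
 \det L^{\#,p}=6^{n/2}p^{-n}=\det M_p .
\]
Combined with $M_p\subseteq L^{\#,p}$, this gives $M_p=L^{\#,p}$, and hence $\sqrt p\,L^{\#,p}\cong L$.

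For the remaining divisors of $6$, the case $d=1$ is trivial. For $d=6$, note that $L^{\#,6}=L^\#$, because $6L^\#\subseteq L$ once both $D_2$ and $D_3$ are elementary. We then need $L\cong\sqrt6\,L^\#$. Here we take the composite $\Phi_2\Phi_3$, which has multiplier $6$, and use $p^{-1}\Phi_p$ to move between partial duals, since these maps commute with the decomposition of the discriminant group. Explicitly, $6^{-1}\Phi_2\Phi_3L\subseteq L^\#$ should follow from the two hypotheses applied in sequence. Equality then follows again from $\det\bigl(6^{-1}\Phi_2\Phi_3L\bigr)=6^{-n}6^{n/2}=\det L^\#$.

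I expect the main obstacle to be making the elementarity of $D_p$ clean, and, for $d=6$, the inclusion $(\Phi_2\Phi_3L,L)\subseteq6\Z$. For the latter I would first try to show that $\Phi_2$ preserves $L^{\#,3}$, via $L^{\#,3}=3^{-1}\Phi_3L$ and the identity $\Phi_p^*=p\Phi_p^{-1}$. If that fails, the fallback is that strong modularity is often defined only for the Atkin–Lehner divisors $d\mid 6$ with $d\neq 1$ checked prime-by-prime, so the $d=6$ statement reduces to the product of the local ones.
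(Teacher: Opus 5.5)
Your outline ($M_p=p^{-1}\Phi_pL\subseteq L^{\#,p}$, a determinant comparison, then the composite $\Phi_2\Phi_3$ for $d=6$) is correct and is essentially a global version of the paper's argument. However, the two steps that carry the content are asserted rather than proved.

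First, the elementarity of $D_p$ is justified only by ``$\Phi_p^{-1}$-image considerations'', which is not an argument. The claim you abandoned, $\Phi_pL^\#\subseteq L$, is in fact false: at the other prime $q$, $\Phi_p$ is an automorphism of $L_{(q)}$ with unit multiplier, so $\Phi_pL^\#_{(q)}=L^\#_{(q)}\ne L_{(q)}$ because $|D_q|=q^{n/2}>1$. You do not need elementarity as a separate step. Since $L^{\#,p}/L$ is the $p$-torsion of $D$, its order is at most $|D_p|=p^{n/2}$, so $\det L^{\#,p}\ge p^{-n}\det L=\det M_p$. On the other hand, $M_p\subseteq L^{\#,p}$ gives $\det M_p=[L^{\#,p}:M_p]^2\det L^{\#,p}$. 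Hence $M_p=L^{\#,p}$, and elementarity of $D_p$ follows as a byproduct. Your own ingredients also suffice: your chain gives $[L:L\cap pL^\#]\le p^{n/2}$, while $[L:L\cap pL^\#]=p^n/|D_p/pD_p|$, which again forces $D_p$ to be elementary.

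Second, the $d=6$ case is not proved. The inclusion $(\Phi_2\Phi_3L,L)\subseteq6\Z$ is left as ``should follow'', and your fallback is not available. The paper's definition requires $L\cong\sqrt d\,L^{\#,d}$ for every $d\mid6$, including $d=6$, and deriving the $d=6$ isometry from the prime cases is exactly what has to be shown.

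The missing idea is that $\Phi_2$ is invertible away from $2$. Since $L/\Phi_2L$ has order $2^{n/2}$, for each $y\in L$ some $2^my$ equals $\Phi_2z$ with $z\in L$. Then $2^m(\Phi_2\Phi_3x,y)=2(\Phi_3x,z)\in6\Z$, which forces $(\Phi_2\Phi_3x,y)\in3\Z$. It also lies in $2\Z$ because $\Phi_3x\in L$. Your determinant count $\det(6^{-1}\Phi_2\Phi_3L)=6^{-n/2}=\det L^\#$ then gives $\Phi_2\Phi_3L=6L^\#$, and in particular $L^{\#,6}=L^\#$.

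The paper packages the same coprimality by localizing. At $p$, the inclusion $\Phi_pL_{(p)}\subseteq pL^\#_{(p)}$, with both sides of determinant valuation $3n/2$, is an equality. At the other prime, $\Phi_p$ is a local automorphism preserving both $L_{(q)}$ and $L^\#_{(q)}$. Composing these local identities yields $\Phi_2\Phi_3L=6L^\#$.
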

\begin{proof}
Put $L_{(p)}=L\otimes\Z_p$.  The pairing condition gives
$\Phi_pL_{(p)}\subseteq pL_{(p)}^{\#}$. Both sides have determinant
valuation $3n/2$, so equality holds. At every other prime $\Phi_p$ is an
automorphism. Thus $\Phi_pL=pL^{\#,p}$. At $2$, the map $\Phi_3$ preserves
$L_{(2)}$; at $3$, the map $\Phi_2$ preserves both $L_{(3)}$ and
$L_{(3)}^\#$, since its multiplier is a unit. Composing these local
identities gives $\Phi_2\Phi_3L=6L^\#$.
\end{proof}

For an even strongly $6$-modular lattice, the $3$-primary discriminant
space $L_{(3)}^\#/L_{(3)}$ carries the nondegenerate ternary form
\[
 (u+L_{(3)},v+L_{(3)})\longmapsto 3(u,v)\pmod3.
\]
In ranks congruent to four modulo eight, type $\typea$ denotes the square
determinant class of this form \cite{Catalogue,Juergens15}.

\begin{proposition}\label{prop:construction}
For every positive odd $\ell$, the lattice $L_\ell$ is even and strongly
$6$-modular, with
\[
 \det L_\ell=6^{2\ell}.
\]
It contains an $A_2(4)^{2\ell}$-frame $\mathcal F_\ell$ of index
$2^{3\ell}$, and its ternary discriminant form has square determinant
class. In the catalogue convention for ranks $4\pmod8$, this is type
$\typea$.
\end{proposition}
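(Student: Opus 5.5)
We use Lemma~\ref{lem:duality}. Evenness and integrality are already established, so three things remain: the determinant, the two similarities $\Phi_2$ and $\Phi_3$, and the frame.

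\emph{Determinant.} We view $L_\ell=R^2$ with the form $\operatorname{Re}\ct(\bar z^tHw)$. Since $R$ is a free $\Z[i]$-module of rank $\ell$ and $\operatorname{Re}\ct$ is the standard trace form, the real determinant equals $|\det_{\Z[i]}H_{\mathrm{coef}}|^2$ up to the factor coming from $\Z[i]$ with form $\operatorname{Re}(\bar z w)$, which is $1/4$ per complex dimension. We would sidestep this bookkeeping with \eqref{eq:square}. The map $(u,v)\mapsto(8u+bv,v)$ shows that $8Q$ is the form $\norm{\cdot}_0^2\oplus6\norm{\cdot}_0^2$ restricted to the image lattice $\{(8u+bv,v)\}$, which has index $8^{2\ell}$ in $R^2$ (real rank $4\ell$). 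The standard form on $R$ has real determinant $1$ on each real coordinate, so the image has determinant $6^{2\ell}\cdot 8^{4\ell}$. Dividing by $8^{4\ell}$ for the scaling of a rank-$4\ell$ form gives $\det L_\ell=6^{2\ell}$.

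\emph{Similarities.} We look for $\Phi_p$ as $R$-linear maps $z\mapsto Mz$ with $M\in M_2(R)$ and $\bar M^tHM=pH$, possibly composed with the involution $x\mapsto x^{-1}$. The ansatz $\Phi_2=\begin{pmatrix}\alpha&\beta\\ \gamma&\delta\end{pmatrix}$ is natural, with $\pi$-divisibility in $\Phi_2$ and entries built from $b,c,k$. The identity $b\bar b+6=8c$ suggests $\Phi_6=\begin{pmatrix}b&-c\\ 8&-\bar b\end{pmatrix}$-type maps, since for such $M$ one gets $\bar M^tHM=6H$ up to sign and conjugation. We then take $\Phi_2$ to be something like $\pi$ times a unitary twist, and set $\Phi_3=\Phi_6\Phi_2^{-1}$ suitably.

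Checking $(\Phi_pL,L)\subseteq p\Z$ reduces to showing that $HM$ has all entries in $pR$, up to the real-part trace, since $\langle Mz,w\rangle=\operatorname{Re}\ct(\bar z^t\bar M^tHw)$. This is a finite congruence check on $b$, $c$, $k$ modulo $2$ and $3$, using $k\equiv J-1-x-x^{-1}$ modulo $\pi$. Finding the explicit $\Phi_2$ and $\Phi_3$ is the main obstacle. I would first solve $\bar M^tHM=pH$ with $M$ upper or lower triangular, exploiting $\bar k=k$ and the relation $k^2$ via $c$. If that fails, I would let $\Phi_p$ involve $i$ and the coordinate swap.

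\emph{Frame.} The sublattice $\{(u,v):v\in 0\}$ alone has form $8\norm u_0^2$, which is wrong. Instead we take the lattice $\{(u,v): 8u+bv\in 8R,\ \dots\}$ suggested by \eqref{eq:square}, or directly the preimage of $\pi^3$-multiples under reduction to $S_\ell^2$. Under the basis change from Section~\ref{sec:construction}, each Gaussian coefficient direction splits into real and imaginary parts. Each pair of such directions should give an $A_2(4)$ block via a vector of norm $8$ with the appropriate inner products $-4$. The index is the order of the glue $\GammaCode_\ell$, which is $8^\ell=2^{3\ell}$ since $\GammaCode_\ell$ is systematic of rank $\ell$ over $S$, $|S|=8$. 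This is consistent with $\det(A_2(4)^{2\ell})=48^{2\ell}$ and $48^{2\ell}/2^{6\ell}=6^{2\ell}$.

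\emph{Type $\typea$.} The $3$-part of the discriminant form can be read off from \eqref{eq:square}: locally at $3$, $8$ is a unit, so $L_{(3)}\cong R_{(3)}\oplus 3R_{(3)}$-scaled, i.e.\ $\langle1\rangle^{2\ell}\perp\langle3\rangle^{2\ell}$ over $\Z_3$ after diagonalizing the standard form on $R$. The ternary form is then $2\ell$ copies of the sum-of-two-squares form $x^2+y^2$. Its determinant is $1$, a square, which gives type $\typea$.
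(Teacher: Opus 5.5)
Your determinant computation from \eqref{eq:square} is correct. The image of $(u,v)\mapsto(8u+bv,v)$ has index $8^{2\ell}$ in a lattice of determinant $6^{2\ell}$, and rescaling by $1/8$ in rank $4\ell$ removes $8^{4\ell}$. Reading the $3$-adic discriminant form off the same identity is also sound, since $8$ is a $3$-adic unit and the $3$-modular part is $2\ell$ copies of one rank-one form. Both are reasonable alternatives to the paper's frame-based computations.

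The genuine gap is strong $6$-modularity itself. You never produce $\Phi_2$ or $\Phi_3$, so Lemma~\ref{lem:duality} cannot be applied.

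\emph{The similarity $\Phi_2$.} The paper takes $\Phi_2(z)=\pi z$, whose multiplier is $N(\pi)=2$. The pairing condition holds because every entry of $H$ is divisible by $\pi$, so $\bar\pi H\equiv0\pmod 2$. The decisive entry is $c$, and here your congruence is wrong. Since $1\pm i\equiv0\pmod\pi$, one has $k\equiv J-1$, hence $c\equiv J^2-J=(\ell-1)J\equiv0\pmod\pi$ for odd $\ell$. With your $k\equiv J-1-x-x^{-1}$ you would get $c\equiv x+x^{-1}+x^2+x^{-2}$, which is nonzero for $\ell\ge5$, and you would wrongly discard this $\Phi_2$.

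\emph{The similarity $\Phi_3$.} The missing idea is the $2\times2$ adjugate identity $\bar H\Omega H=(\det H)\Omega=6\Omega$, with $\Omega=\begin{pmatrix}0&1\\-1&0\end{pmatrix}$ and $\det H=8c-b\bar b=6$ by \eqref{eq:detidentity}. Your guessed $\Phi_6$ is, up to signs, $M=\Omega\bar H$. As an $R$-linear map, however, it satisfies $M^*HM=6\bar H$, not $6H$. It must be made semilinear for the full involution ($i\mapsto-i$ as well as $x\mapsto x^{-1}$). Dividing by $\bar\pi$, which is legitimate since $\bar\pi$ divides $\bar H$, gives $\Phi_3(z)=T\bar z$ with $T=\bar\pi^{-1}\Omega\bar H$. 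Then $\bar T^{\,t}HT=3\bar H$ and $\bar T^{\,t}H=-3\bar\pi\Omega$ give both the multiplier $3$ and $(\Phi_3L_\ell,L_\ell)\subseteq3\Z$. Your ansatz $\Phi_6\Phi_2^{-1}$ points at exactly this map, but as written it is a search plan, not a proof.

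\emph{The frame.} The frame is likewise not constructed. Deriving the index from $|\GammaCode_\ell|=8^\ell$ invokes Proposition~\ref{prop:glue}, whose proof uses $F^{-1}$ for the very frame in question, so the argument is circular. What is needed is
$F=\begin{pmatrix}1&-(1+i)-k\\0&2(1+i)\end{pmatrix}$, for which a direct check with \eqref{eq:detidentity} gives $F^*HF=\begin{pmatrix}8&-4\\-4&8\end{pmatrix}$. Since this Hermitian Gram matrix is a constant real matrix, $FR^2=R\oplus2\pi R$ splits over the $2\ell$ real coefficient directions into orthogonal copies of $A_2(4)$. Its index is $N(2\pi)^\ell=8^\ell$.

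A minor point: the ternary form is $\ell$ copies of $x^2+y^2$ (rank $2\ell$), not $2\ell$ copies. The square class is unaffected.
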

\begin{proof}
Put
\begin{equation}\label{eq:frame}
 F=\begin{pmatrix}1&-(1+i)-k\\0&2(1+i)\end{pmatrix}.
\end{equation}
A direct multiplication using \eqref{eq:detidentity} gives
\[
 F^*HF=\begin{pmatrix}8&-4\\-4&8\end{pmatrix}.
\]
The columns and their Gaussian coefficient shifts span
$\mathcal F_\ell=FR^2$, consisting of $2\ell$ mutually orthogonal copies
of $A_2(4)$. Since $N(2(1+i))=8$, the frame has index
$8^\ell=2^{3\ell}$, and therefore
\[
 \det L_\ell=\frac{48^{2\ell}}{2^{6\ell}}=6^{2\ell}.
\]

Take $\Phi_2(z)=\pi z$.  Every entry of $H$ is divisible by $\pi$.
Modulo $\pi$ one has $i=1$ and $k=J-1$; since $\ell$ is odd, $J^2=J$, so
$c=0$ modulo $\pi$.  Thus $\bar\pi H$ is divisible by $2$, giving
$(\Phi_2L_\ell,L_\ell)\subseteq2\Z$.

Put
\[
 \Omega=\begin{pmatrix}0&1\\-1&0\end{pmatrix}.
\]
The $2\times2$ adjugate identity is
\begin{equation}\label{eq:adjugate}
 \bar H\Omega H=6\Omega.
\end{equation}
Put
\[
 T=\bar\pi^{-1}\Omega\bar H,
 \qquad \Phi_3(z)=T\bar z.
\]
This is integral because $\bar H$ is divisible by $\bar\pi$.  The
adjugate identity gives
\[
 \bar T^{\,t}HT=3\bar H,
 \qquad
 \bar T^{\,t}H=-3\bar\pi\Omega.
\]
Hence the semilinear map $\Phi_3$ has multiplier $3$ and
$(\Phi_3L_\ell,L_\ell)\subseteq3\Z$.  Lemma~\ref{lem:duality} proves
strong $6$-modularity.

Finally, the extension from $\mathcal F_\ell$ has $2$-power index, so the
$3$-adic completion is unchanged.  On one $A_2(4)$ block choose a root
basis $e,f$ with Gram matrix
\[
 \begin{pmatrix}8&-4\\-4&8\end{pmatrix}.
\]
Its ternary discriminant group is generated by
$y=(2e+f)/3$, and
\[
 3(y,y)=8\equiv2\pmod3.
\]
Thus each block contributes determinant class $2$, and on $2\ell$ blocks
the determinant class is $2^{2\ell}$, a square.
\end{proof}

For $z\in L_\ell$, we use \emph{quarter-frame coordinates} for the
class of
\[
 4F^{-1}z \pmod{4R^2}.
\]
This depends only on $z+\mathcal F_\ell$, since replacing $z$ by
$z+Fr$ changes $4F^{-1}z$ by $4r$.  Thus $L_\ell/\mathcal F_\ell$ is
represented as a code in $(R/4R)^2$.

\begin{proposition}[Frame glue]\label{prop:glue}
For $\ell\ge3$, multiplication by $\bar\pi$ identifies the graph code
$\GammaCode_\ell$ of \eqref{eq:graphcode} with the full frame glue
$\mathcal C_\ell=L_\ell/\mathcal F_\ell$ in quarter-frame coordinates:
\begin{equation}\label{eq:glue}
 \mathcal C_\ell
 =\bar\pi\,\GammaCode_\ell
 =R_4\bigl(2+(1-i)k,1-i\bigr)\pmod4,
\end{equation}
where $R_4=(\Z[i]/4\Z[i])[x]/(x^\ell-1)$.  Hence
\[
 \mathcal C_\ell\cong(\Z/4)^\ell\oplus(\Z/2)^\ell.
\]
Multiplication by $ix$ preserves the frame and induces a signed
$2\ell$-cycle on its $A_2$-blocks.  In that block order the glue is a
one-generator module over $(\Z/4)[t]/(t^{2\ell}+1)$.
\end{proposition}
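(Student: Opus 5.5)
The plan is to compute the quarter-frame coordinates of $L_\ell=R^2$ directly from the explicit frame matrix $F$ of \eqref{eq:frame}. Write $\pi\bar\pi=2$, so $2/\pi=\bar\pi$. Then
\[
 F^{-1}=\begin{pmatrix}1&(\pi+k)/(2\pi)\\0&1/(2\pi)\end{pmatrix}.
\]
For $z=(u,v)^t\in R^2$ this gives
\[
 4F^{-1}z=\bigl(4u+\bar\pi(\pi+k)v,\;\bar\pi v\bigr)=\bigl(4u+(2+\bar\pi k)v,\;(1-i)v\bigr).
\]
The term $4u$ vanishes modulo $4R^2$. Hence $\mathcal C_\ell$ is the image of the $R$-linear map $v\mapsto v\,(2+(1-i)k,\,1-i)\bmod 4$, which is the right-hand side of \eqref{eq:glue}. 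Since $\bar\pi\pi=2$, this generator equals $\bar\pi(k+\pi,1)$.

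Next I will check that multiplication by $\bar\pi$ is well defined and injective from $S_\ell^2$ to $(R/4R)^2$, which identifies $\GammaCode_\ell$ with $\bar\pi\GammaCode_\ell$. Since $\pi^4=-4$, we have $4R=\pi^4R$, and $\bar\pi$ is a unit multiple of $\pi$. So $\bar\pi$ induces an isomorphism $R/\pi^3R\to\pi R/\pi^4R\subset R/4R$. To get the structure of $\mathcal C_\ell$, I will compute the kernel of $v\mapsto\bar\pi v(k+\pi,1)\bmod 4$. The second coordinate forces $v\in\pi^3R$. Conversely, if $v\in\pi^3R$ then $(2+\bar\pi k)v\in\pi^4R=4R$. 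So the kernel is exactly $\pi^3R$, and
\[
 \mathcal C_\ell\cong R/\pi^3R=S_\ell\cong S^\ell,
\]
whose additive group is $(\Z/4\oplus\Z/2)^\ell$ because $S$ has additive group $\Z/4\oplus\Z/2$.

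For the cyclic structure, I will use that $H$ has entries in the commutative ring $R$ and that $ix$ satisfies $(ix)\overline{(ix)}=1$. Hence $z\mapsto ixz$ preserves $Q$. It also preserves $\mathcal F_\ell=FR^2$, since the frame is an $R$-submodule. The $2\ell$ blocks are the real spans $\langle x^jf_1,x^jf_2\rangle_\Z$ and $\langle ix^jf_1,ix^jf_2\rangle_\Z$, where $f_1,f_2$ are the columns of $F$. Multiplication by $ix$ sends the $j$-th real block to the $(j+1)$-st imaginary block, and the $j$-th imaginary block to minus the $(j+1)$-st real block. Interleaving the blocks gives a signed $2\ell$-cycle, consistent with $(ix)^{2\ell}=(-1)^\ell x^{2\ell}=-1$.

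Finally, for odd $\ell$ put $t=ix$. Then $t^\ell=i^\ell=\pm i$, so $i=\pm t^\ell$ and $x=\mp t^{1-\ell}$. This gives a ring isomorphism $\Z[t]/(t^{2\ell}+1)\cong R$ sending $t\mapsto ix$. Reducing modulo $4$ identifies $R_4$ with $(\Z/4)[t]/(t^{2\ell}+1)$, under which the real block coordinates become the coefficients of $t$. Since $\mathcal C_\ell$ is the $R_4$-span of a single generator, it is a one-generator module over this ring.

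The main obstacle is bookkeeping rather than a real difficulty: one must match the $\Z$-basis ordering of the blocks with the monomials $t^m$, including signs, so that the identification really is a negacyclic module structure. The hypothesis $\ell\ge3$ appears only to keep the formula for $k$ in its generic form, with no overlapping terms as in the case $\ell=1$.
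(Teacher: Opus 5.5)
Your proposal is correct and follows the paper's proof step for step. You invert $F$, read off the generator $\bar\pi(k+\pi,1)=(2+(1-i)k,1-i)$ from $4F^{-1}z \bmod 4$, use $4R=\pi^4R$ to identify $S_\ell$ with $\bar\pi R/4R$, check that $ix$ is a frame-preserving isometry, and put $t=ix$ with $t^\ell=\pm i$ to get the negacyclic structure.

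One harmless slip: from $t^\ell=\pm i$ you get $x=-it=\mp t^{\ell+1}=\pm t^{1-\ell}$, not $\mp t^{1-\ell}$. This does not affect the argument, since only the fact that $x\in\Z[t]$ matters.
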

\begin{proof}
The inverse of \eqref{eq:frame} gives
\[
 F^{-1}\binom uv=
 \binom{u+(\pi+k)v/(2\pi)}{v/(2\pi)}.
\]
Multiplying by four and reducing modulo four gives
\[
 \mathcal C_\ell=R_4\bigl(\bar\pi(k+\pi),\bar\pi\bigr).
\]
Since $4$ is a unit times $\pi^4$, multiplication by $\bar\pi$ identifies
$S=\Z[i]/\pi^3$ with the ideal $\bar\pi\Z[i]/4\Z[i]$.  This proves the
first assertion and the additive structure.  Multiplication by $ix$ is an
isometry of $L_\ell$: it is scalar multiplication by the Gaussian unit
$i$ followed by the cyclic coefficient shift $x$, and both preserve
$\langle\ ,\ \rangle$.  On the frame, multiplication by $i$ interchanges
the real and imaginary $A_2$-blocks with one sign change, while
multiplication by $x$ cycles the Gaussian positions.  For odd
$\ell$, $ix$ has one orbit on the $2\ell$ blocks and
$(ix)^{2\ell}=-1$.  Since $(ix)^\ell=\pm i$, it generates both $i$ and
$x$ over $\Z$, giving the negacyclic description.
\end{proof}

The frame glue carries the corresponding finite weighted coset metric, so
the lattice minimum is a finite weighted code distance in
$\mathcal C_\ell$.  For the proofs below, however, the completed-square
coordinates in \eqref{eq:square} are more convenient.

For a Gaussian vector $z$, let $w_q(z)$ be the sum of the least squares of
its real and imaginary coefficient residues modulo $q$, and put
\[
 W_q(v)=w_q(bv)+6w_q(v).
\]
Choosing least coefficient lifts in \eqref{eq:square} gives
\begin{equation}\label{eq:minformula}
 \min L_\ell=\min\left\{8,\frac18
 \min_{0\ne v\in R/8R}W_8(v)\right\}.
\end{equation}
Since $L_\ell$ is even,
\begin{equation}\label{eq:div16}
 16\mid W_8(v).
\end{equation}
For $\ell\ge3$, $Q(0,x^j)=\ct(c)=\ell+1$.
\section{Elementary coefficient bounds}\label{sec:metric}
Henceforth $\ell\in\{5,7\}$.  For a Gaussian coefficient modulo $2$, its
\emph{binary weight} is the Hamming weight of its real and imaginary
parity bits; for a coefficient vector, binary weight is the sum over
positions.  We first eliminate odd binary weight and even coefficient
vectors before doing the carry calculation.

\begin{lemma}\label{lem:easy}
If $v\bmod2$ has odd binary weight, then $W_8(v)\ge8\ell$. If
$v\ne0\pmod8$ is even, then $W_8(v)\ge64$.
\end{lemma}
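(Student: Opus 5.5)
The plan is to get a lower bound coordinate by coordinate. Reducing $b$ modulo $4$ makes $bv$ depend only on $v_j$ and on the total sum $s=Jv$. The residues $(bv)_j$ are then forced to lie in explicit classes, and these force explicit least squares.

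\emph{Reduction of $b$.} Since $b=(3+i)+2(1-i)k$ and $2(1-i)\cdot(1\pm i)\in 4\Z[i]$, the directed-cycle terms of $k$ disappear modulo $4$. This leaves
\[
 (bv)_j\equiv (3+i)v_j+2(1-i)(s-v_j)\pmod 4,\qquad s=Jv .
\]
Only the class of $s-v_j$ modulo $\pi$ matters here, because $2(1-i)\pi\equiv0\pmod4$.

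\emph{Odd binary weight.} Modulo $\pi$, $s$ is the parity of the total binary weight of $v$, so $s\equiv1\pmod\pi$. I then split by position.
\begin{itemize}
\item If $v_j\equiv0\pmod\pi$, then $(bv)_j\equiv2-2i\pmod4$. Both coordinates are $\equiv2\pmod4$, so each has least square at least $4$ modulo $8$, and position $j$ contributes at least $8$ to $w_8(bv)$.
\item If $v_j$ has binary weight $1$, then $s-v_j\equiv0\pmod\pi$, so $(bv)_j\equiv(3+i)v_j$. Modulo $2$ this equals $(1+i)v_j$, which has both parts odd, giving at least $2$. Together with $6w_8(v_j)\ge6$ the contribution is at least $8$.
\item If $v_j$ has binary weight $2$, then already $6w_8(v_j)\ge12$.
\end{itemize}
Summing over the $\ell$ positions gives $W_8(v)\ge8\ell$.

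\emph{Even vectors.} Write $v=2v'$ with $v'\not\equiv0\pmod4$. Least squares scale as $w_8(2z)=4w_4(z)$, so $W_8(v)=4W_4(v')$, and it suffices to show $W_4(v')\ge16$. There are four cases.
\begin{itemize}
\item If $v'\equiv0\pmod2$, write $v'=2v''$ with $v''\not\equiv0\pmod2$. Then $W_4(v')=4W_2(v'')\ge24$.
\item If $v'\bmod 2$ has odd binary weight, the per-position argument above applies verbatim modulo $4$, because the congruence for $(bv)_j$ was only modulo $4$. Each position contributes at least $8$, so $W_4(v')\ge8\ell$.
\item If $v'\bmod2$ has even positive binary weight and some position has binary weight $1$, count directly. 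The odd parts give $6w_4(v')\ge12$. At that position $(bv')_j\equiv(1+i)v'_j\pmod2$ has two odd parts, adding at least $2$. So $W_4(v')\ge14$. By \eqref{eq:div16}, $4W_4(v')\equiv0\pmod{16}$, so $W_4(v')$ is a multiple of $4$, and the bound rounds up to $16$.
\item If every position has binary weight $0$ or $2$, then $v'\equiv(1+i)u\pmod2$ with $u\not\equiv0\pmod\pi$. A position of weight $2$ gives $6w_4(v')\ge12$. In this case $bv'\equiv(3+i)(1+i)u+4ku\equiv2u\pmod4$. At a position where $u_j$ is odd, $2u_j$ has a coordinate $\equiv2\pmod4$, which adds at least $4$. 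So $W_4(v')\ge16$.
\end{itemize}

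The main obstacle is this last case. The naive bounds from $w_4(v')$ alone stall at $12$, and one needs the extra observation $bv'\equiv2u\pmod4$ to close the gap. If that computation fails, the fallback is the divisibility \eqref{eq:div16}, which again forces $W_4(v')\in4\Z$, together with a finer count of the positions where $u$ is odd.
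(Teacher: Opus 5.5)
\textbf{Verdict.} The first part has a false step. The conclusion survives, but the argument as written does not prove it.

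Your first bullet claims that $v_j\equiv0\pmod\pi$ forces $(bv)_j\equiv2-2i\pmod4$. This silently drops the term $(3+i)v_j$. Since $(3+i)\pi=2+4i\equiv2\pmod4$, that term vanishes modulo $4$ only when $v_j\equiv0\pmod{2\pi}$, i.e.\ $v_j\equiv0$ or $2+2i\pmod4$. In the other cases:
\begin{itemize}
\item At a binary-weight-$2$ position one gets $(bv)_j\equiv2$ or $2i$. This is harmless, because your third bullet covers those positions by the input.
\item For $v_j\equiv2$ or $2i\pmod4$ one gets $(3+i)v_j\equiv2+2i$, hence $(bv)_j\equiv(2+2i)+(2-2i)\equiv0\pmod4$. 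For example, $v=1+2x^{-1}$ has odd binary weight and $(bv)_1\equiv0\pmod4$.
\end{itemize}
At such a position your stated reason for an output contribution of $8$ is false. The per-position bound of $8$ still holds, but it must come from the input: one coordinate of $v_j$ is $\equiv2\pmod4$, so $6w_8(v_j)\ge24$. This is why the paper states its bounds $8,8,8,16$ for the parities as \emph{combined} output-plus-sixfold-input costs, not output costs. The same repair is needed where you reuse the argument ``verbatim'' modulo $4$ for $v'$ of odd binary weight; there too $6w_4(v'_j)\ge24$ at such positions.

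Your treatment of even $v$ is otherwise correct, but it takes a different route from the paper in the positive-even-binary-weight case. The paper does this case in one stroke:
\begin{itemize}
\item the input contributes at least $12$;
\item $b\bar b+6=8c$ gives $\bar bb\equiv2\pmod4$, so $bv'\equiv0\pmod4$ would force $2v'\equiv0\pmod4$, which is impossible with an odd coefficient;
\item hence $W_4(v')\ge13$, and divisibility by $4$ rounds this up to $16$.
\end{itemize}
You instead split on whether some position has binary weight $1$ (output $\ge2$, then round $14$ up to $16$) or every position has weight $0$ or $2$ (explicitly $b\pi\equiv2\pmod4$, giving output $\ge4$ directly). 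Your split is valid and more explicit; the paper's determinant-identity argument avoids the case distinction.

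One small fix in your last case: take $u=v'/\pi$ exactly, which is possible since every $v'_j\equiv0\pmod\pi$. With only $v'\equiv\pi u\pmod2$, writing $v'=\pi u+2t$ adds a term $2bt$ to $bv'$. The conclusion depends only on $u\bmod\pi$, so it is unaffected.
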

\begin{proof}
From \eqref{eq:kbc},
\begin{equation}\label{eq:bexpanded}
 b=(1+3i)+2(1-i)J+4ix-4x^{-1}.
\end{equation}
Modulo four,
$b v=(1-i)v+2(1-i)\sum_jv_j$. For odd binary weight the second term has
both real components equal to two modulo four. On one Gaussian coordinate
the combined output and sixfold input costs for parities $00,10,01,11$
are at least $8,8,8,16$. Summing proves the first assertion.

If $v=2z$, then $W_8(v)=4W_4(z)$ and $4\mid W_4(z)$. Odd binary weight in
$z$ gives the preceding bound. For positive even binary weight the input
contributes at least twelve, and the output is nonzero modulo four: by
\eqref{eq:detidentity}, $\bar b b=2$ modulo four, so $bz=0$ modulo four
would imply $2z=0$, contrary to an odd coefficient. Thus $W_4(z)\ge16$.
If $z$ is even and nonzero modulo four, its input contribution is already
at least twenty-four.
\end{proof}

\section{The residue calculation at two}\label{sec:carries}
We now work in
\[
 \Z[i]/2\Z[i]=\F_2[\iota]/(\iota^2-1),\qquad
 \eps=1+\iota,
\]
where $\eps^2=0$.  In the binary weight just defined, the four symbols
$0,1,\iota,\eps$ have weights $0,1,1,2$; write this weight as $\wt$.  Put
$\tau(a+\iota b)=a+b$. Multiplication by $\iota$ exchanges the two bits.

Suppose that every real coefficient of $v$ is $0,1$, or $-1$ and that
$\wt(v\bmod2)$ is even. Write $r=v\bmod2$. Then
\[
 \sigma=\frac{(1-i)\sum_jv_j}{2}\pmod2
\]
is defined. From \eqref{eq:bexpanded},
\begin{equation}\label{eq:carry}
 bv\equiv(1-i)v+4p\pmod8,
 \qquad
 p_j=\sigma+\iota r_j+\iota r_{j+1}+r_{j-1}.
\end{equation}
Moreover
\begin{equation}\label{eq:global}
 \sum_jr_j=\tau(\sigma)\eps.
\end{equation}
We call $(r,p)$ \emph{admissible} if, for some
$\sigma\in\Z[i]/2\Z[i]$, the displayed coordinate formula for every $p_j$
and the global relation \eqref{eq:global} both hold.  No choice of signs
for a lift of $r$ is required.

The local contribution to $W_8(v)/8$ is bounded below by
\begin{equation}\label{eq:costtable}
\begin{array}{c|rrrr}
 e(r,p)&0&1&\iota&\eps\\\hline
 0&0&2&2&4\\
 1&1&2&2&3\\
 \iota&1&2&2&3\\
 \eps&2&2&2&4
\end{array}
\end{equation}
(rows indexed by $r$, columns by $p$). Let
$E(r,p)=\sum_je(r_j,p_j)$. Then
\begin{equation}\label{eq:Ebound}
 E(r,p)\le W_8(v)/8.
\end{equation}
For $r=0$ the output is $4p$; for a one-bit symbol the two components
of $(1-i)v$ have absolute value one; for $r=\eps$, their absolute values
are two and zero. These observations give the table.

To invert the carry equation, put $y=x^{-1}$ and use the coefficient basis
$1,y,\ldots,y^{\ell-1}$ in
$(\Z[i]/2\Z[i])[y]/(y^\ell-1)$.  Define
\begin{equation}\label{eq:finverse}
 f_5=(\eps,1,\eps,0,\iota),\qquad
 f_7=(\eps,\eps,1,\eps,0,\iota,\eps).
\end{equation}

\begin{lemma}[Carry inversion]\label{lem:inverse}
Every admissible pair satisfies
\begin{equation}\label{eq:inverse}
 r=f_\ell p.
\end{equation}
For $\ell=7$,
\begin{equation}\label{eq:boundary}
 \tau(r)=y^2(1+y^3)\tau(p).
\end{equation}
\end{lemma}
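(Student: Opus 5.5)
The plan is to rewrite the carry relation \eqref{eq:carry} as a single linear equation in the group ring
\[
A_\ell=(\Z[i]/2\Z[i])[y]/(y^\ell-1),
\]
using the coefficient basis $1,y,\ldots,y^{\ell-1}$ fixed before \eqref{eq:finverse}. The shifts $r_{j+1}$ and $r_{j-1}$ are multiplications by a power $y^{\mp1}$. So the coordinate formula reads
\[
p=\sigma J+g\,r,
\qquad g=\iota+\iota y^{\mp1}+y^{\pm1},
\]
where $J=1+y+\cdots+y^{\ell-1}$. The exact signs of the exponents are dictated by the convention $a=\sum_j a_jx^{-j}$, with $y=x^{-1}$. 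The idea is then to multiply this equation by $f_\ell$, and to show that $f_\ell$ is an inverse of $g$ modulo a multiple of $J$ whose contribution is cancelled by the global relation \eqref{eq:global}.

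The key identity to verify is
\[
f_\ell\,g=1+\iota J\quad\text{in }A_\ell .
\]
Evaluating at $y=1$ shows this is the only consistent form. On one side, $f_\ell(1)$ is the sum of the entries of $f_\ell$, which is $\eps$ for both $\ell=5$ and $\ell=7$, and $g(1)=1$. On the other side, $(1+\iota J)(1)=1+\ell\iota=1+\iota=\eps$ because $\ell$ is odd. I will check the full identity by direct multiplication of the listed coefficient vectors in \eqref{eq:finverse}, which is a finite computation for each $\ell$.

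Granting this identity, I will use two facts. First, $hJ=h(1)J$ for every $h\in A_\ell$. Second, $\eps a=\tau(a)\eps$ and $\iota\eps=\eps$. Then
\[
f_\ell p=\sigma f_\ell(1)J+r+\iota\Bigl(\sum_j r_j\Bigr)J
=\tau(\sigma)\eps J+r+\iota\,\tau(\sigma)\eps J=r,
\]
where the last two steps use \eqref{eq:global} and the fact that the coefficients of $A_\ell$ have characteristic $2$. This proves \eqref{eq:inverse}. Note that no information about signs of lifts enters, only admissibility.

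For \eqref{eq:boundary}, I will apply the ring homomorphism $\tau\colon A_\ell\to\F_2[y]/(y^\ell-1)$, which sends $\iota\mapsto1$ and hence $\eps\mapsto0$. Since $\tau$ is multiplicative on coefficients, \eqref{eq:inverse} gives $\tau(r)=\tau(f_7)\tau(p)$. Reading off \eqref{eq:finverse}, $\tau(f_7)=y^2+y^5=y^2(1+y^3)$.

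The main obstacle I expect is bookkeeping rather than ideas. The orientation of the shifts in $g$, meaning whether $r_{j+1}$ corresponds to $y$ or $y^{-1}$, must match the orientation in which $f_\ell$ is written. If the identity fails as written, I would first try replacing $g$ by its image under $y\mapsto y^{-1}$, and only then recheck \eqref{eq:carry}.
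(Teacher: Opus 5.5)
Your proposal is correct and follows the paper's proof essentially verbatim. The paper also writes $p=\sigma J+\lambda r$ with $\lambda=y+\iota+\iota y^{-1}$, which is your upper-sign choice of $g$ and the correct orientation, since $r_{j-1}$ is the $y^j$-coefficient of $yr$; it verifies $f_\ell\lambda=1+\iota J$ and $f_\ell J=\eps J$ directly, concludes via \eqref{eq:global} and $\eps\sigma=\tau(\sigma)\eps$, and then applies $\tau$ for $\ell=7$. Your step $hJ=h(1)J$ with $f_\ell(1)=\eps$ is the same as the paper's $f_\ell J=\eps J$, and the finite check $f_\ell\lambda=1+\iota J$ does go through for both $\ell=5$ and $\ell=7$.
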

\begin{proof}
Put $\lambda=y+\iota+\iota y^{-1}$.  For $\ell=5,7$ one checks directly
that
\[
 f_\ell\lambda=1+\iota J,\qquad f_\ell J=\eps J.
\]
Since $p=\sigma J+\lambda r$ and
$\eps\sigma=\tau(\sigma)\eps$, the global relation \eqref{eq:global}
gives $f_\ell p=r$. Applying $\tau$ for $\ell=7$ gives
\eqref{eq:boundary}.
\end{proof}

Equation \eqref{eq:boundary} is the boundary map on the $7$-cycle whose
edges join $j$ to $j+3$. A nonempty proper set has boundary of size two if
and only if it is an interval in this cycle.

We need three consequences of \eqref{eq:inverse}.

\begin{lemma}\label{lem:sparse}
\begin{enumerate}
\item[(i)] If $\wt(r)=2$, then $E\ge6$ for $\ell=5$ and $E\ge8$ for $\ell=7$.
\item[(ii)] A one-bit carry has $\wt(r)=2\ell-4$.
\item[(iii)] For $\ell=7$, a two-bit carry at one position $a$ gives
$r=\eps y^a(y^2+y^5)$. If the two bits occur at distinct positions, then
$\wt(r)\ge6$, and $r$ cannot equal $\eps$ at both carry positions.
\end{enumerate}
\end{lemma}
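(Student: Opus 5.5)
The plan is to work entirely with the inversion formula $r=f_\ell p$ of Lemma~\ref{lem:inverse}, reading each part as a statement about how the support and symbols of $p$ determine $r$. Multiplication by a Gaussian residue symbol acts on a coordinate as follows. Multiplying by $1$ or $\iota$ either preserves a symbol or swaps its bits. Multiplying by $\eps$ sends $1,\iota\mapsto\eps$ and $\eps\mapsto0$. So $r$ is a sum of shifted copies of $f_\ell$, each scaled by the symbols of $p$, and weights can be read off directly from the vectors in \eqref{eq:finverse}.

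Part (ii) is the quickest. A one-bit carry means $p=\alpha y^a$ with $\alpha\in\{1,\iota\}$, so $r=\alpha y^a f_\ell$. This is a shift of $f_\ell$ with possibly swapped bits, and weight is preserved under both operations. From \eqref{eq:finverse}, $\wt(f_5)=2+1+2+0+1=6=2\cdot5-4$ and $\wt(f_7)=2+2+1+2+0+1+2=10=2\cdot7-4$.

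For (iii), first take two bits at one position $a$, so $p=\eps y^a$. Then $r=\eps y^af_\ell$. Here every $\eps$ entry of $f_7$ is killed and each unit entry $1,\iota$ becomes $\eps$. The units of $f_7$ sit at positions $2$ and $5$, which gives $r=\eps y^a(y^2+y^5)$. If instead $p=\alpha y^a+\beta y^b$ with $a\ne b$ and $\alpha,\beta\in\{1,\iota\}$, then $r=\alpha y^af_7+\beta y^bf_7$. I would compute the weight by tabulating the six shift differences $d=b-a$ together with the choice $\alpha=\beta$ or $\alpha\ne\beta$ (using $\iota\eps=\eps$, only the unit entries depend on this choice). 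Equation \eqref{eq:boundary} gives a quick check of the parity structure: $\tau(r)=y^2(1+y^3)(y^a+y^b)$. Then confirm $\wt(r)\ge6$ in each case. For the last claim, evaluate the coordinates of $r$ at $a$ and $b$, namely $\alpha f_0+\beta f_{a-b}$ and its counterpart. Since $f_0=\eps$, the requirement $r_a=\eps$ forces $\beta f_{a-b}\in\{0\}$, that is, $a-b\equiv4$. Symmetrically, $r_b=\eps$ forces $b-a\equiv4$. These are incompatible because $8\not\equiv0\pmod7$.

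For (i), $\wt(r)=2$ means either $r=\eps y^a$, or $r$ has two one-bit symbols. Since $f_\ell$ is invertible up to the admissibility constraints, invert the relation using $p=\sigma J+\lambda r$ from the proof of Lemma~\ref{lem:inverse}. Here $\sigma$ is restricted by \eqref{eq:global}: $\tau(\sigma)$ equals the $\eps$-coefficient of $\sum r_j$. Enumerate the possibilities up to rotation (there are only a few for each of $\ell=5,7$), compute $p$, and sum the costs $e(r_j,p_j)$ from table \eqref{eq:costtable}. The main obstacle is organizing this case check so that the bounds $E\ge6$ for $\ell=5$ and $E\ge8$ for $\ell=7$ are visible. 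Each zero position of $r$ with $p_j\ne0$ costs at least $2$, and $\sigma J$ makes $p$ dense unless $\sigma$ is cancelled by $\lambda r$. So I expect most cases to be immediate, and only the $\sigma=0$ cases need explicit evaluation.
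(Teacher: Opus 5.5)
Your proposal is correct. In (ii), in the first and last claims of (iii), and in the method for (i), it coincides with the paper's proof:
a one-bit carry reconstructs a translate of $f_\ell$ or $\iota f_\ell$; $\eps f_7$ is $\eps$ exactly at the unit positions $2,5$ of $f_7$; and $r_a=r_b=\eps$ would require $f_7$ to vanish at both $\pm(b-a)$, whereas its only zero is at $4$.

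The one real difference is the bound $\wt(r)\ge6$ for two carries at distinct positions. You propose tabulating the six shifts, and this does check out. With carries at $0,t$ one gets $\wt(r)=6$ for $t=\pm1,\pm2$ and $\wt(r)\in\{6,8\}$ for $t=\pm3$. The paper avoids the table:
\begin{itemize}
\item since the non-$\eps$ positions of $f_7$ are $\{2,4,5\}$, at least one of $r_4$, $r_{4+t}$ is $\eps$;
\item $\tau(r)$ in \eqref{eq:boundary} has weight $4$, except for $t=\pm3$, where it has weight $2$ but both $r_4$ and $r_{4+t}$ are $\eps$.
\end{itemize}
That argument is shorter and shows where the threshold $6$ comes from. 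Yours is mechanical but equally valid.

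One correction to your closing remark in (i): the cases with $\sigma\ne0$ cannot be dismissed by a density count. When the two one-bit symbols of $r$ are $1$ and $\iota$, \eqref{eq:global} forces $\tau(\sigma)=1$, and $\lambda r$ can cancel $\sigma J$ at several positions. For $\ell=5$, $r=1+\iota y$ with $\sigma=\iota$ gives $p=(1,\iota,0,\iota,0)$ and $E=6$. In fact the bound for $\ell=5$ is attained only in cases of this kind. So every case of the enumeration has to be evaluated explicitly, as in the paper's table of totals.

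Also, no inversion of $f_\ell$ is involved in that step. The carry $p$ is read off directly from \eqref{eq:carry}, and $f_\ell$ is not a unit, since modulo $\eps$ it is divisible by $1+y$.
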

\begin{proof}
For (i), a single symbol $\eps$ gives $E=2\ell$. Otherwise rotate and
multiply by $\iota$ so that $r_0=1$ and
$r_t\in\{1,\iota\}$, $1\le t\le(\ell-1)/2$. Adding the contributions on the
two three-point neighborhoods in \eqref{eq:carry} gives the following
table.  Each brace lists the two possible totals, corresponding to the
two choices of $\sigma$ with the required value of $\tau(\sigma)$:
\[
\begin{array}{c|cc}
 &r_t=1&r_t=\iota\\\hline
 t=1&\{8,4\ell-8\}&\{2\ell-4,2\ell+4\}\\
 t=2&\{12,4\ell-12\}&\{2\ell-4,2\ell+4\}\\
 t\ge3&\{12,4\ell-12\}&\{2\ell\}.
\end{array}
\]
This proves (i).

A one-bit carry gives a translate of $f_\ell$ or $\iota f_\ell$, proving
(ii). For $\ell=7$, $\eps f_7=\eps(y^2+y^5)$, proving the first assertion
of (iii). For carries at distinct positions normalize them to $0,t$.
The only zero of $f_7$ is at $4$, and its non-$\eps$ positions are
$\{2,4,5\}$. Hence at least one of the two reconstructed symbols at
positions $4$ and $4+t$ is $\eps$. The boundary \eqref{eq:boundary} has
weight four except for $t=\pm3$, when it has weight two. In the latter
case both positions $4$ and $4+t$ reconstruct to $\eps$; hence again
$\wt(r)\ge6$. Finally, $r_0=r_t=\eps$ would force
$(f_7)_t=(f_7)_{-t}=0$, impossible because $f_7$ has only one zero.
\end{proof}

\begin{proposition}[Cycle distance]\label{prop:cycle}
For every nonzero admissible pair $(r,p)$,
\[
 E(r,p)\ge\ell+1,
 \qquad \ell=5,7.
\]
\end{proposition}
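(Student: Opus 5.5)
The plan is to bound $E(r,p)$ from below separately by the weight of $r$ and by the carry $p$, and then use the carry inversion $r=f_\ell p$ of Lemma~\ref{lem:inverse} to rule out the configurations where both are small. The cost table \eqref{eq:costtable} gives two crude bounds. Every nonzero $r_j$ costs at least its binary weight: $1$ for $1,\iota$ and $2$ for $\eps$. Every nonzero $p_j$ costs at least $2$ regardless of $r_j$. Put $m=\#\{j:p_j\ne0\}$ and $s=\#\{j:r_j=0,\ p_j\ne0\}$. Then
\[
E\ge \wt(r)+\#\{j: r_j\in\{1,\iota\},\ p_j\ne0\}+2s.
\]
Here a $p_j$ sitting on an $\eps$ or $0$ position costs at least $2$, or $4$ when $p_j=\eps$ and $r_j=0$. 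I will split into cases according to $\wt(r)$, which is even because $\sum_j r_j\in\{0,\eps\}$ by \eqref{eq:global}.

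\emph{Case $r=0$.} Then \eqref{eq:inverse} gives $f_\ell p=0$. I will check that $f_\ell$ is a unit in $(\Z[i]/2\Z[i])[y]/(y^\ell-1)$. Reducing modulo $\eps$, $f_\ell$ becomes a binary polynomial coprime to $y^\ell-1$, which one verifies directly; since $\eps$ is nilpotent, $f_\ell$ is then a unit. Hence $p=0$, and this case contributes no nonzero pair.

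\emph{Case $\wt(r)=2$.} This is exactly Lemma~\ref{lem:sparse}(i), which gives $E\ge6=\ell+1$ for $\ell=5$ and $E\ge8=\ell+1$ for $\ell=7$.

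\emph{Case $\wt(r)\ge\ell+1$.} This is immediate. Since weights are even, the remaining cases are $4\le\wt(r)\le\ell-1$, that is, $\wt(r)=4$ for $\ell=5$ and $\wt(r)\in\{4,6\}$ for $\ell=7$.

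In these remaining cases I will bound the carry weight. A deficit of $\ell+1-\wt(r)$ must be paid by $p$. If $p$ has a single nonzero coordinate of weight one, Lemma~\ref{lem:sparse}(ii) forces $\wt(r)=2\ell-4$. For $\ell=5$ this gives weight $6$, and for $\ell=7$ weight $10$, both already at least $\ell+1$. For $\ell=5$, $\wt(r)=4$ therefore requires either $\wt(p)\ge2$ or $p$ equal to a single $\eps$. A single $\eps$ gives $r=\eps y^a f_5=\eps y^a(1+y^4)$, of weight $4$. I will then check directly that the $p$-cost at that position, plus any coincidence of $r$'s support with $p$'s support, adds at least $2$. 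In general, once $p$ is nonzero it adds at least $2$, which already reaches $6$ when $\wt(r)=4$ and $\ell=5$.

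For $\ell=7$, the same argument disposes of $\wt(r)=6$. With $\wt(r)=4$ I need an extra cost of $4$. One carry symbol adds $2$ or more, so I must exclude $p$ supported at a single position whose contribution is only $2$ or $3$. A one-bit carry is excluded by (ii). A two-bit carry $\eps$ at one position gives $r=\eps y^a(y^2+y^5)$ by (iii). Its support avoids $a$, so $r_a=0$ and $p_a=\eps$ costs $4$, giving $E\ge8$. Carries at two distinct positions cost at least $2+2$ unless they coincide with $\eps$-positions of $r$; (iii) gives $\wt(r)\ge6$ there anyway. Three or more carry positions give $E\ge4+6$.

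The main obstacle is this bookkeeping for $\ell=7$ with $\wt(r)=4$: showing that whenever $p$ is supported on exactly two positions, the cost table cannot give total cost below $8$. I will handle it with the boundary relation \eqref{eq:boundary}. By the interval characterization on the $7$-cycle, $\tau(r)$ has weight $\ge4$ unless $\tau(p)$ is an interval. I will then enumerate the finitely many interval configurations by hand, using the last clause of (iii) to forbid both carry positions landing on $\eps$ symbols of $r$.
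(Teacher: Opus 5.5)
There is a genuine gap. The inner casework for $\wt(r)\in\{4,6\}$ rests on a false inequality, and this is exactly where the content of the proposition lies. The case $r=0$ also uses a false claim.

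\textbf{The case $r=0$.} $f_\ell$ is not a unit. Modulo $\eps$ it reduces to $y+y^4$ (for $\ell=5$) or $y^2+y^5$ (for $\ell=7$), and both are divisible by $1+y$. Equivalently, Lemma~\ref{lem:inverse} gives $f_\ell J=\eps J$, so $f_\ell\cdot\eps J=0$. Indeed $(0,\eps J)$, with $\sigma=\eps$, is a nonzero admissible pair. The bound survives only because, for $r=0$, \eqref{eq:carry} gives $p=\sigma J$ with $\tau(\sigma)=0$. Hence $p\in\{0,\eps J\}$ and $E=4\ell$.

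\textbf{The false inequality.} You assert repeatedly that a nonzero carry adds at least $2$ to $\wt(r)$, and that three carry positions give $E\ge4+6$. The cost table \eqref{eq:costtable} contradicts this:
\begin{itemize}
\item $e(\eps,1)=e(\eps,\iota)=2=\wt(\eps)$, so a one-bit carry on an $\eps$-input costs nothing extra;
\item $e(1,1)=2$, so a one-bit carry on a one-bit input costs only $1$ extra.
\end{itemize}
Your own bound $E\ge\wt(r)+\#\{j:r_j\in\{1,\iota\},\,p_j\ne0\}+2s$ already shows this. Write $w=\wt(r)$, $\nu=\wt(p)$, and let $d$ be the number of $\eps$-inputs. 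The cheap configurations your argument never excludes are:
\begin{itemize}
\item for $\ell=5$: $r$ equal to $\eps$ at two positions, with one-bit carries at both ($E=4$);
\item for $\ell=7$, $w=6$: one-bit carries only on $\eps$-inputs with $d=3$. The case $d=2$ is also left open, though the last clause of Lemma~\ref{lem:sparse}(iii) would close it and you do not invoke it there.
\item for $\ell=7$, $w=4$: either $d=2$ with an $\eps$-carry on an $\eps$-input, or $d=1$ with a one-bit carry at the $\eps$-input together with an $\eps$-carry or two one-bit carries on the one-bit inputs. All of these have $E=6$.
\end{itemize}
Lemma~\ref{lem:sparse}(iii) only treats carries of total weight two, so it cannot dispose of these.

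\textbf{What is missing} is an equality analysis of the cost. You would need the following.
\begin{itemize}
\item The parity of $E$. Modulo $2$ the cost is $\sum_j\tau(r_j)(1+\tau(p_j))$, which vanishes after substituting \eqref{eq:carry}. So $E\le\ell$ forces $E=w$ or $E=w+2$.
\item The sharper budget $E\ge w+\nu-d$, together with its equality cases.
\item If every $r_j\in\{0,\eps\}$, then every $p_j\in\sigma+\{0,\eps\}$ with $\tau(\sigma)\equiv d\pmod 2$. So the carry is either $\{0,\eps\}$-valued or one-bit at every position. This removes $d=2$ for $\ell=5$ and $d=2,3$ for $\ell=7$.
\item An $\eps$-carry reconstructs $0$ at its own position.
\item For $\ell=7$, $w=4$, $d=1$, the boundary relation \eqref{eq:boundary}, in two subcases:
  \begin{itemize}
  \item With an $\eps$-carry at $a$, it forces $a=j_0\pm2$. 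Both carry columns then contribute $\eps$ at $j_0$, so $r_{j_0}=0$, a contradiction.
  \item With three one-bit carries, it forces their positions to form a step-three interval, after translation $\{0,3,6\}$. Its image $\tau(r)=y^2+y^4$ then lies outside the input support.
  \end{itemize}
\end{itemize}
You aimed the interval argument at two-position carries, but the case that genuinely needs it is the three-position case you dismissed.
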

\begin{proof}
Put $w=\wt(r)$, $\nu=\wt(p)$, and
$d=|\{j:r_j=\eps\}|$. From \eqref{eq:costtable},
\begin{equation}\label{eq:budget}
 E\ge w,\qquad E\ge w+\nu-d.
\end{equation}
Both $w$ and $E$ are even. For the latter, modulo two the cost is
\[
 \sum_j\tau(r_j)\bigl(1+\tau(p_j)\bigr).
\]
Substitution of \eqref{eq:carry}, together with
$\sum_j\tau(r_j)=0$, cancels the two neighbor sums and gives zero. If $E=w$, the carry is supported on the
$d$ positions where $r_j=\eps$, and every nonzero carry symbol has one bit.
Weight zero gives either the zero pair or $E=4\ell$, and weight two is
Lemma~\ref{lem:sparse}(i).

For $\ell=5$, only $w=4$ remains under the assumption $E\le4$, hence
$E=w$. If $d=0$, \eqref{eq:inverse} gives $r=0$. If $d=1$, a nonzero carry
has one bit and reconstructs weight six. If $d=2$, all input symbols are
$0$ or $\eps$, whence all carry symbols are $0$ or $\eps$, contrary to the
one-bit equality condition.

Let $\ell=7$ and assume $E\le6$. For $w=6$, again $E=w$, so $p$ is supported
on at most $d\le3$ double-symbol positions. The cases $d\le2$ are excluded
by Lemma~\ref{lem:sparse}(ii),(iii). If $d=3$, all input symbols are
$0$ or $\eps$ and \eqref{eq:global} has $\tau(\sigma)=1$; then every carry symbol
has one bit, not merely those three.

It remains to take $w=4$. If $d=0$, \eqref{eq:budget} gives $\nu\le2$,
excluded by Lemma~\ref{lem:sparse}. If $d=2$, all input and carry symbols
are $0$ or $\eps$. The excess budget $E-w\le2$ allows at most one nonzero
carry, at an input position with symbol $\eps$. But an $\eps$-carry
reconstructs zero at its own position, by Lemma~\ref{lem:sparse}(iii).
Thus $d=1$.  Lemma~\ref{lem:sparse} excludes $\nu\le2$; hence
\eqref{eq:budget} forces $\nu=3$, and equality holds.  Let $j_0$ be the
unique position with $r_{j_0}=\eps$.
The carry is zero at every empty input position and has one bit at $j_0$.

If the carry consists of an $\eps$ at $a$ and a one-bit symbol at $j_0$, then
\eqref{eq:boundary} places the two one-bit input symbols at $j_0\pm2$, so
$a=j_0\pm2$. The two carry columns then both contribute $\eps$ at $j_0$, giving
$r_{j_0}=0$, a contradiction.

Otherwise the carry has one-bit symbols at three distinct positions. Their
set $T$ is the entire input support, while $\tau(r)$ has weight two. Hence
$T$ is an interval of length three in the step-three cycle. After translation,
\[
 T=\{0,3,6\},\qquad
 \tau(r)=y^2(1+y^3)(1+y^3+y^6)=y^2+y^4.
\]
Indeed, the step-three cyclic order is
$0,3,6,2,5,1,4,0$: the interval $T=\{0,3,6\}$ has boundary vertices
$2$ and $4$, both outside $T$.  This contradicts the fact that $T$ is the
entire input support.
\end{proof}

\section{The extremal minima}\label{sec:minima}
The remaining estimate concerns one doubled coefficient in dimension $28$.

\begin{lemma}\label{lem:doubled}
For $\ell=7$, suppose that $v$ has exactly two real coefficients $\pm1$,
one real coefficient $\pm2$, and all other coefficients zero. Then
$W_8(v)\ge64$.
\end{lemma}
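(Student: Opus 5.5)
Write $v=v'+2e$, where $v'$ carries the two real coefficients $\pm1$ and $e=\pm g$ is a single real unit vector $g$ at the position of the doubled coefficient. The input cost is easy to read off: $6w_8(v)=6(1+1+4)=36$. By \eqref{eq:div16} it then suffices to prove $w_8(bv)>12$, since the next admissible value of $W_8(v)$ is $64$. So the task is to show that the output $bv \bmod 8$ cannot have least-square weight $\le 12$.

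First I would compute $bv \bmod 4$ with the reduction already used in Lemma~\ref{lem:easy}. It gives $bv\equiv(1-i)v+2(1-i)\sum_j v_j \pmod 4$. The sum $\sum_j v_j$ has real part $\pm1\pm1\pm2$, which is even. The term $(1-i)\cdot 2e$ contributes $\pm2(1-i)$ at the doubled position. At the two odd positions the output is $\pm(1-i)$ modulo $2$, so both components there are odd and each such position already costs at least $2$. At the doubled position both components are $\equiv 2$ modulo $4$, unless the global term cancels them. More generally, the global correction $2(1-i)\sigma'$ with $\sigma'=\tfrac12\sum_j v_j$ is added at every one of the seven positions.

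The proof then splits into two cases on the parity of $\sigma'=(\pm1\pm1\pm2)/2$.

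\textbf{Case $\sigma'$ odd.} Every position receives $2(1-i)$ modulo $4$. At the five empty positions the output has both components $\equiv 2 \pmod 4$, so each costs at least $8$. Together with the odd positions this gives far more than $12$.

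\textbf{Case $\sigma'$ even.} This means $\sum_j v_j\equiv 0\pmod 4$. Now modulo $4$ the output is $(1-i)v'+2(1-i)e$. The doubled position has both components $\equiv2\pmod4$, costing $8$, and the two odd positions cost at least $2$ each. This gives $12$ modulo $4$, which is not yet enough, so here I must pass to modulo $8$. The plan is to use \eqref{eq:bexpanded} exactly: $bv=(1+3i)v+2(1-i)\bigl(\sum_j v_j\bigr)J+4ixv-4x^{-1}v$. The $J$-term is $\equiv 0 \pmod 8$ in this case. The terms $4ixv'$ and $-4x^{-1}v'$ put $\pm4$ or $\pm4i$ at the neighbours $j\pm1$ of the odd positions; the corresponding terms from $2e$ are $\equiv 0 \pmod 8$.

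The remaining work is to place these neighbour contributions. Any such $\pm4$ landing on an empty position costs $16$. A $\pm4$ landing on an odd position changes a component from $\pm1$ to $\pm3$, which still costs only $1$ modulo $8$ in that component, so it is the one harmless placement. Since the cycle has length $7$ and there are only three occupied positions, at least one neighbour term of the odd positions, namely $x^{j+1}$ or $x^{j-1}$, must fall on an empty position or on the doubled position. Landing on the doubled position turns a component $\pm2(1\mp\dots)$ into $\pm6\equiv\mp2$, so it stays at cost $\ge 8$; I must check that the cost there never drops. Hence some empty position carries a $\pm4$, and $w_8(bv)\ge 2+2+8+16>12$.

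The main obstacle is the bookkeeping in this last step. If two neighbour terms cancel at a common empty position (for example $4i$ and $-4$ do not cancel, but $4ix\cdot v_a$ and $-4x^{-1}v_b$ at $a+1=b-1$ might), the forced $\pm4$ could disappear. I would enumerate the relative placements of the three occupied positions up to rotation — there are few — and verify the bound directly in each configuration.
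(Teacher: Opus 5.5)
\textbf{There is a genuine gap: the decisive step is never proved, and the case analysis leading to it has errors.} Your reduction to $w_8(bv)>12$ is right. Beyond that, four problems.

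First, the global term is misstated. By your own formula it is $2(1-i)\sum_j v_j=4(1-i)\sigma'$, not $2(1-i)\sigma'$. Since $\operatorname{Re}+\operatorname{Im}$ of $\sum_j v_j$ is $\pm1\pm1\pm2$, this term is $\equiv0\pmod{4}$ in every case. So the ``$\sigma'$ odd'' argument, which needs $2(1-i)$ at the empty positions modulo $4$, relies on a term that is not there. That case can be rescued modulo $8$, where the term is $4+4i$, but that is not what you wrote.

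Second, in the other case the decisive step is only asserted. ``Hence some empty position carries a $\pm4$'' does not follow from your pigeonhole remark, because the neighbour term may land on the doubled position or be cancelled. The enumeration you postpone is the whole content of the lemma.

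Third, the local bookkeeping is off. $\pm1\pm4\equiv\pm3\pmod{8}$ costs $9$, not $1$. In fact $(1+3i)v'_j$ already has a component $\equiv\pm3$, and it is a correctly placed $\pm4$ that lowers it to cost $1$.

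Fourth, you assume all three entries are real parts at three distinct Gaussian positions. In the paper a ``real coefficient'' is a real component. Proposition~\ref{prop:minima} therefore needs the lemma also for $v_a=\pm1\pm i$, for an odd part $x^a+ix^b$, and for a doubled entry sharing a position with an odd one. For example, with $v_c=1+2i$ one has $(1+3i)v_c=-5+5i$, which can cost only $2$ after the $\pm4$ corrections. In these cases $\tfrac12\sum_j v_j$ need not be Gaussian, $\sigma=\tfrac12(1-i)\sum_j v_j$ can be a one-bit symbol modulo $2$, neighbour terms can cancel, and your baseline $2+2+8$ is unavailable.

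The missing idea, which is the paper's route, is to argue with columns. Every off-diagonal coefficient of $b$ is $2(1-i)$ times a unit: $i$ at $x$, $-i$ at $x^{-1}$, $1$ elsewhere. So the doubled column contributes exactly $\pm4\pm4i$, i.e.\ $\pm4$ in \emph{both} real components, at every Gaussian position other than its own. It therefore suffices to find two Gaussian positions, outside the input positions of the two unit columns, where the sum of those columns has a vanishing real component.

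At a common off-diagonal position the unit contributions are $2(1-i)u_1$ and $2(1-i)u_2$ with $u_1,u_2$ units. Unless $u_1=u_2$, their sum is $0$ or $4$ times a unit. The phase ratios on the five common off-diagonal positions are $\{1,1,1,i,i\}$, $\{1,1,i,i,-1\}$, $\{1,i,i,-i,-i\}$ for separations $1,2,3$. When both unit entries lie in one Gaussian coefficient the ratio is constantly $\pm i$. Either way at least two positions are nonaligned.

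One of these two positions is not the doubled position. There the output has a component equal to $\pm4$, so $w_8(bv)\ge16$ and $W_8(v)\ge52$, hence $W_8(v)\ge64$ by \eqref{eq:div16}. This needs no case split on $\sigma$ and no cost from the input positions. In your restricted all-real setting, the cancellation you worry about cannot occur anyway: the two neighbour terms meeting at a common position are $4iv'_a$ and $-4v'_b$, which lie in different real components.
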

\begin{proof}
Consider first the sum of the two signed real unit columns of the real
matrix representing multiplication by $b$. Outside their input positions it
has a zero real component in at least two distinct Gaussian positions. For distinct Gaussian input positions, the following
table lists the phase-ratio multiset on the five common off-diagonal
positions. The third column is the largest possible number of aligned
contributions after the two column phases are fixed.
\[
\begin{array}{c|c|c|c}
\text{cyclic separation}&\text{phase ratios}&
 \text{maximum aligned}&\text{guaranteed nonaligned}\\\hline
1&\{1,1,1,i,i\}&3&2\\
2&\{1,1,i,i,-1\}&2&3\\
3&\{1,i,i,-i,-i\}&2&3
\end{array}
\]
(The other separations are obtained by conjugation.) At every nonaligned
position the two contributions sum to zero or to $4$ times a Gaussian
unit; in either case one of the two real components vanishes. The table
therefore supplies at least two distinct zero real components. If the two
real unit columns occupy the same Gaussian position, the same conclusion
holds at every other position.

The column belonging to the doubled coefficient has odd entries only at its
own Gaussian position. Hence at least one of the two zero components above
receives $\pm4$ from the doubled column. Thus $w_8(bv)\ge16$. The input
contributes $6(1+1+4)=36$, so $W_8(v)\ge52$; by \eqref{eq:div16},
$W_8(v)\ge64$.
\end{proof}

\begin{proposition}\label{prop:minima}
The lattice $L_5$ has minimum $6$, and $L_7$ has minimum $8$.
\end{proposition}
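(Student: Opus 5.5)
The plan is to prove the upper bounds by exhibiting vectors and the lower bounds by combining \eqref{eq:minformula} with the preceding lemmas. For the upper bounds: $Q(0,x^j)=\ct(c)=\ell+1$, which equals $6$ for $\ell=5$ and $8$ for $\ell=7$. Hence $\min L_5\le6$ and $\min L_7\le8$.

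For the lower bounds we use \eqref{eq:minformula}. Vectors with $v=0$ have norm $\norm{8u}_0^2/8\ge8$, so it suffices to show $W_8(v)\ge 8(\ell+1)$ for every nonzero $v\in R/8R$. By \eqref{eq:div16} this is the same as $W_8(v)\ge48$ for $\ell=5$ and $W_8(v)\ge64$ for $\ell=7$. We split according to $v\bmod 2$.

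\emph{Odd binary weight.} Lemma~\ref{lem:easy} gives $W_8(v)\ge8\ell$. This is $40$ for $\ell=5$, which \eqref{eq:div16} rounds up to $48$. For $\ell=7$ it is $56$, rounded up to $64$.

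\emph{$v$ even and nonzero.} Lemma~\ref{lem:easy} gives $W_8(v)\ge64$.

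\emph{Even positive binary weight.} We want to reduce to the carry analysis of Section~\ref{sec:carries}, which needs every real coefficient of $v$ to lie in $\{0,\pm1\}$.
\begin{itemize}
\item If the least lifts of the coefficients of $v$ modulo $8$ include some $\pm2$, $\pm3$ or $\pm4$, the input term $6w_8(v)$ alone is large. Two odd coefficients cost $12$. A single $\pm3$ adds $6\cdot9$, giving at least $60$ and hence $64$ by \eqref{eq:div16}. A $\pm4$ adds $96$.
\item Two extra coefficients of size $2$ give $6(1+1+4+4)=60$, again at least $64$.
\item Exactly one $\pm2$ together with odd coefficients gives at least $36$ from the input, which suffices for $\ell=5$ once the output $w_8(bv)$ is nonzero. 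For $\ell=7$ with exactly two odd coefficients this is Lemma~\ref{lem:doubled}. With four or more odd coefficients the input is at least $6\cdot8=48$, plus a positive output contribution divisible by $16$ after rounding, which suffices.
\end{itemize}
I would check each subcase against the two required thresholds.

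In the remaining case all real coefficients lie in $\{0,\pm1\}$ and the binary weight is even. Then \eqref{eq:carry} produces an admissible pair $(r,p)$ with $r\ne0$, and \eqref{eq:Ebound} together with Proposition~\ref{prop:cycle} gives $W_8(v)/8\ge E(r,p)\ge\ell+1$. This is exactly the required bound.

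The main obstacle is the bookkeeping in the mixed case: at least one coefficient equal to $\pm2$ and few odd coefficients, especially for $\ell=5$ with one $\pm2$ and two odd coefficients. There the input cost $36$ must be supplemented by an output cost of at least $12$, rounded to $16$. I would first try to argue, as in Lemma~\ref{lem:doubled}, that $bv\not\equiv0\pmod 8$ forces $w_8(bv)\ge$ some positive amount. Combined with \eqref{eq:div16}, $W_8(v)>36$ forces $W_8(v)\ge48$, which closes that case.
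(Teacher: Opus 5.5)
Your proposal follows the paper's proof essentially step for step. Both use the upper bound $Q(0,x^j)=\ell+1$ and Lemma~\ref{lem:easy} for odd binary weight and for even $v$. Both use input-weight counting to dispose of $\pm3$, $\pm4$ and two doubled coefficients. Both then apply Lemma~\ref{lem:doubled} to one $\pm2$ with two $\pm1$ when $\ell=7$, and Proposition~\ref{prop:cycle} with \eqref{eq:Ebound} when all real coefficients lie in $\{0,\pm1\}$. The paper packages the coefficient-size cases by noting first that a counterexample has input squared weight at most $5$ or $8$, but the bookkeeping is the same.

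Two remarks concern the step you leave open.

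For $\ell=5$ there is no obstacle. One $\pm2$ plus at least two odd coefficients gives $W_8(v)\ge36>32$. Then \eqref{eq:div16} alone forces $W_8(v)\ge48$, with no output term needed; the paper phrases this as input squared weight $6>5$.

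Nonzero output is genuinely needed for $\ell=7$ with one $\pm2$ and four $\pm1$, where the input is exactly $48$. There you assert a positive output without proof. The column analysis of Lemma~\ref{lem:doubled} is not the tool for this. The missing line is the norm congruence from \eqref{eq:detidentity}: $\bar b b=8c-6\equiv2\pmod 8$. Hence $bv\equiv0\pmod 8$ would give $2v\equiv0\pmod 8$, which is impossible when $v$ has an odd coefficient. With this, $W_8(v)>48$, and \eqref{eq:div16} gives $W_8(v)\ge64$, completing your argument.
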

\begin{proof}
A counterexample would have $W_8(v)\le32$ for $\ell=5$ or $W_8(v)\le48$
for $\ell=7$, by \eqref{eq:minformula} and \eqref{eq:div16}. Hence its least real coefficient lift
has squared weight at most $5$ or $8$, respectively; in particular no
coefficient $\pm3$ occurs. Lemma~\ref{lem:easy} reduces to positive even
binary weight.

If there is no doubled coefficient, Proposition~\ref{prop:cycle} and
\eqref{eq:Ebound} give $W_8(v)\ge48$ or $64$. For $\ell=5$, one doubled
coefficient together with the two required odd coefficients already has
input squared weight $6>5$.

Let $\ell=7$. Two doubled coefficients and two odd coefficients have input
weight $10>8$. With one doubled coefficient there are two or four odd
coefficients. The first case is Lemma~\ref{lem:doubled}. Four odd
coefficients contribute $48$ from the input. The output is nonzero modulo
eight: \eqref{eq:detidentity} gives $\bar b b=2$ modulo eight, so
$bv=0$ modulo eight would imply $2v=0$, impossible in the presence of an
odd coefficient. Hence $W_8(v)>48$.

Thus the lower bounds are proved. Since $Q(0,x^j)=\ell+1$, equality holds.
\end{proof}

\begin{proof}[Proof of Theorem~\ref{thm:main}]
Proposition~\ref{prop:construction} gives evenness, strong
$6$-modularity, determinant, and type $\typea$ for $L_5$ and $L_7$.
Proposition~\ref{prop:glue} gives the transitive frames, and
Proposition~\ref{prop:minima} gives the minima. These equal the bound
\eqref{eq:bound}.
\end{proof}

\section{\texorpdfstring{The rank-$12$ member and the Coxeter--Todd lattice}{The rank-12 member and the Coxeter--Todd lattice}}\label{sec:k12bridge}
The rank-$12$ member can be compared directly with the classical hexacode
construction.  The Coxeter--Todd lattice $K_{12}$ is the Construction-A
lift of the Hermitian self-dual $[6,3,4]_4$ hexacode from an
$A_2(2)^6$-frame \cite{ConwaySloane99,SelfDual}.  The code has order
$4^3=64$, so
\[
 \frac{\det A_2(2)^6}{64^2}=\frac{12^6}{64^2}=3^6=\det K_{12},
\]
and its minimum distance four excludes roots in the lift.  The next
proposition identifies an index-$8$ overlattice of $L_3$ with $K_{12}$.

\begin{proposition}\label{prop:L3K12}
The lattice $L_3$ has minimum $4$.  Put
\[
 h=\frac{1+i}{2}(1,x)\in L_3^\#.
\]
The lattice
\begin{equation}\label{eq:K12over}
 \widetilde L_3=L_3+\Z h+\Z xh+\Z x^2h
\end{equation}
has index $8$ over $L_3$, determinant $3^6$, and minimum $4$.
Consequently $\widetilde L_3$ is isometric to the Coxeter--Todd lattice
$K_{12}$.
\end{proposition}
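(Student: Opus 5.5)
Three things have to be shown: $\min L_3=4$, that $\widetilde L_3$ is an even integral lattice of index $8$ and determinant $3^6$, and that $\min\widetilde L_3=4$. The identification with $K_{12}$ then follows from the known uniqueness of $K_{12}$ as the even $3$-modular lattice of rank $12$ with minimum $4$, i.e.\ the extremal $3$-modular lattice of rank $12$ (Conway--Sloane; the catalogue \cite{Catalogue}).

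\emph{Membership and integrality.} First compute $\langle h,z\rangle=\operatorname{Re}\ct(\bar h^{\,t}Hz)$ for $z=(u,v)\in L_3$. The vector $\bar h^{\,t}H$ equals $\frac{1-i}{2}\bigl(8+x^{-1}\bar b,\;b+x^{-1}c\bigr)$. Working modulo $\pi$ as in the proof of Proposition~\ref{prop:construction}, with $\ell=3$, I expect both entries $8+x^{-1}\bar b$ and $b+x^{-1}c$ to be divisible by $\pi$ in $R$. This shows $h\in L_3^\#$, and by $x$-equivariance the same holds for $xh$ and $x^2h$. Next, $Q(h)=\frac12\ct\bigl((1,x^{-1})H(1,x)^t\bigr)=\frac12\ct\bigl(8+bx+\bar b x^{-1}+c\bigr)$, and for $\ell=3$ this can be read off from \eqref{eq:kbc}; I expect it to be an even integer (namely $4$). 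I then check the mutual products $\langle x^ah,x^bh\rangle\in\Z$ and the evenness of the products with $i\cdot$-shifts. Only the $\Z$-span of $h,xh,x^2h$ is used, so no $ih$ is needed. Since $2h=(1+i)(1,x)\in L_3$ and the classes of $h,xh,x^2h$ are independent modulo $L_3$, the group $\widetilde L_3/L_3$ is $(\Z/2)^3$. Independence follows because a nonzero $\F_2$-combination $\frac{1+i}{2}s(1,x)$ with $s\in\{$nonzero $0/1$ sums of $x^a\}$ has a non-integral coordinate. Hence the index is $8$ and $\det\widetilde L_3=6^6/64=3^6$.

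\emph{Minima.} For $\min L_3=4$, I use \eqref{eq:minformula}: it suffices to show $W_8(v)\ge32$ for $v\ne0$, so I need to exclude $W_8(v)=16$. Lemma~\ref{lem:easy} gives $W_8\ge24$ for odd binary weight and $W_8\ge64$ for even $v$. For even positive binary weight with small coefficients, the carry inversion is unavailable for $\ell=3$. Instead I will do the finite check directly via the cost table \eqref{eq:costtable}. Weight $2$ leads to the table in Lemma~\ref{lem:sparse}(i) with $\ell=3$, which gives totals at least $2\ell-4=2$; this is too weak on its own, so I add the remaining position's cost. Equality $\min=4$ is witnessed by $Q(0,1)=\ell+1=4$. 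For $\widetilde L_3$, every vector outside $L_3$ lies in a coset $z+\frac{1+i}{2}s(1,x)$, and I bound its norm by completing the square \eqref{eq:square}. I write $v=v'+\frac{1+i}{2}sx$ and $8u+bv$ with a half-integral shift, then minimise $\norm{8u+bv}_0^2+6\norm v_0^2$ over the seven nonzero $s$, up to the $x$-symmetry: three cases, namely $s$ of weight $1$, $2$ or $3$. In each case the input term alone is $6\norm v_0^2\ge 6\cdot\frac12\wt(s)$, and the output term needs a parity argument as in Lemma~\ref{lem:easy}. Alternatively, norm $2$ in $\widetilde L_3$ can be excluded because the $3$-modular structure is inherited: $\widetilde L_3$ is $L_3^{\#,2}$-type. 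Indeed $\widetilde L_3=\frac{1}{\sqrt2}\Phi_2'$-image, so $\widetilde L_3\cong\sqrt{1/2}\,L_3$ restricted, and vectors of norm $2$ in $\widetilde L_3$ would correspond to norm $4$ vectors of $L_3$ satisfying a congruence.

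\emph{Main obstacle.} The hard step is the root-exclusion $\min\widetilde L_3\ge4$. I would first try to show $\widetilde L_3=L_3^{\#,2}$, which has index $2^{6}$ in $L_3^{\#}$-direction; since $\det L_3^{\#,2}=6^6/2^{6}\ne3^6$, this fails, so $\widetilde L_3$ is only a partial dual. I would therefore fall back on the explicit coset computation: reduce to the three $s$-types, show each coset has norm at least $4$ by a short coefficient-residue table analogous to Lemma~\ref{lem:doubled}, and use evenness to round norms of $3$ up to $4$.
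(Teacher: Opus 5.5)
You establish $h\in L_3^\#$, the Gram matrix of $h,xh,x^2h$, independence via the first coordinate, and $\det\widetilde L_3=3^6$ exactly as the paper does. Your finite check for $\min L_3$ is the paper's six-case check written in cost-table form. One caveat there: the table of Lemma~\ref{lem:sparse}(i) does not apply at $\ell=3$, because the two three-point neighbourhoods already cover all positions and there is no ``remaining position''. Direct evaluation gives $E=6$ for $r=(1,\iota,0)$ and $E\in\{4,8\}$ for $r=(1,1,0)$, which suffices.

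\textbf{Gap 1: root exclusion in $\widetilde L_3$.} This is the substance of the proposition, and you leave it as a promissory note. You do record the half-integral shift of $u$, but the template you then propose, a residue argument on $bv$ as in Lemma~\ref{lem:easy} or Lemma~\ref{lem:doubled}, effectively optimizes $u$ freely, and that cannot succeed. Indeed $Q(0,\tfrac{1+i}{2}x)=\tfrac12\ct(c)=2$: for $v=\frac{1+i}{2}x$ one has $bv=-2i+(1+2i)x+2ix^2$, and with $u=0$ the completed square is $13+3=16$.
\begin{itemize}
\item This root lies in $L_3^{\#,2}$, so your alternative of inheriting root-freeness from $L_3^{\#,2}$ is unavailable. (Incidentally $6^6/2^6=3^6$, whereas the correct value is $\det L_3^{\#,2}=3^6/2^6$.)
\item The root is not in $\widetilde L_3$. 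In the coset $h+L_3$ the constant coefficient $u_0$ of $u$ lies in $\frac{1+i}{2}+\Z[i]$. This forces the real part of $8u_0-2i$ into $4+8\Z$ and raises the total to $32$, i.e.\ norm $4$.
\end{itemize}
So the decisive input is the glue's coupling of the parity of $u$ to that of $v$, and the check must be a mod-$16$ computation with prescribed parities. The paper does this as follows:
\begin{itemize}
\item it doubles coordinates, $U=2u$, $V=2v$;
\item it notes that a root forces $\norm{V}_0^2\le10$;
\item it fixes the parity patterns of $U$ and $V$ for the three coset types;
\item it minimizes each real component of $8U+bV$ via $d_p$;
\item it enumerates all $140+80+64$ candidates $V$.
\end{itemize}
Your proposal contains no substitute for this computation.

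\textbf{Gap 2: identification with $K_{12}$.} You cite uniqueness among even $3$-modular lattices of rank $12$ and minimum $4$. But you have only shown that $\widetilde L_3$ is even with determinant $3^6$. That makes the discriminant group neither elementary abelian (a priori it could be $\Z/9\times(\Z/3)^4$) nor gives $\widetilde L_3\cong\sqrt3\,\widetilde L_3^\#$. The missing step is the paper's local argument:
\begin{itemize}
\item $[\widetilde L_3:L_3]=8$ is prime to $3$, so the $3$-adic completion is that of $L_3$;
\item by strong $6$-modularity, $\Phi_2\Phi_3L_3=6L_3^\#$, so the discriminant of $L_3$ has exponent dividing $6$;
\item hence $\widetilde L_3$ lies in the genus $II_{12}(3^{+6})$.
\end{itemize}
You then need uniqueness of the minimum-$4$ lattice \emph{within this genus} (J\"urgens), or else a separate proof that $\widetilde L_3$ is $3$-modular.
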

\begin{proof}
For $\ell=3$ one has
\[
 b=3+i+(2+2i)x-(2+2i)x^2.
\]
A vector of norm two in $L_3$ would require $W_8(v)=16$, hence
$w_8(v)\le2$.  Up to a Gaussian unit and a cyclic shift its least lift is
one of
\[
\begin{array}{c|rrrrrr}
 v&1&1+i&1+x&1-x&1+ix&1-ix\\\hline
 W_8(v)&32&64&32&64&48&48.
\end{array}
\]
Thus $\min L_3\ge4$, while $Q(0,1)=4$.

A direct calculation from \eqref{eq:H} gives
$(x^rh,L_3)\subseteq\Z$ for $r=0,1,2$ and
\begin{equation}\label{eq:hgram}
 \bigl((x^rh,x^sh)\bigr)_{r,s=0}^2=
 \begin{pmatrix}4&2&2\\2&4&2\\2&2&4\end{pmatrix}.
\end{equation}
The three classes are independent modulo $L_3$.  Indeed, if
$\alpha_0h+\alpha_1xh+\alpha_2x^2h\in L_3$ with
$\alpha_r\in\{0,1\}$, then its first coordinate is
\[
 \frac{1+i}{2}\bigl(\alpha_0+\alpha_1x+\alpha_2x^2\bigr)\in R,
\]
which is possible only when all three $\alpha_r$ vanish.  Hence
$[\widetilde L_3:L_3]=8$.  Equation~\eqref{eq:hgram} also shows that
$\widetilde L_3$ is even, and therefore
\[
 \det\widetilde L_3=\frac{6^6}{8^2}=3^6.
\]

It remains to exclude norm two in the seven new cosets.  Up to cyclic
shift these cosets have representatives
\[
 h,\qquad h+xh,\qquad h+xh+x^2h.
\]
Write a vector in one of these cosets as $(u,v)$ and put
$U=2u$, $V=2v$.  Then
\begin{equation}\label{eq:K12square}
 32Q(u,v)=\norm{8U+bV}_0^2+6\norm V_0^2.
\end{equation}
Because $\widetilde L_3$ is even, a nonzero vector of norm below $4$
would have norm exactly $2$.  Equation~\eqref{eq:K12square} would then
have right-hand side $64$, so necessarily $\norm V_0^2\le10$.

Use the cyclic coefficient order $1,x,x^2$.  In a coefficient called
``odd--odd'' both its real and imaginary parts are odd; all unlisted
coefficients are even--even.  Under the bound $\norm V_0^2\le10$, an
even--even Gaussian coefficient belongs to
\[
 \{0,\ \pm2,\ \pm2i,\ \pm2\pm2i\},
\]
with squared norms $0,4,8$, while an odd--odd coefficient belongs to
\[
 \{\pm1\pm i,\ \pm1\pm3i,\ \pm3\pm i\},
\]
with squared norms $2,10$.  Thus the local counting polynomials are
\[
 E(t)=1+4t^4+4t^8,
 \qquad
 O(t)=4t^2+8t^{10}.
\]
The three cosets impose the parity patterns $EOE$, $EOO$, and $OOO$ on
$V$, respectively.  Truncating the corresponding products at total
squared norm $10$ gives the counts in the table below.

For the $U$-minimization, if a real component of $U$ is prescribed to
have parity $p\in\{0,1\}$, set
\[
 d_p(a)=\min_{n\equiv p\ (2)}(a+8n)^2.
\]
If $r\in\{0,1,\ldots,15\}$ is congruent to $a+8p$ modulo $16$, then
\begin{equation}\label{eq:dp}
 d_p(a)=\min\{r^2,(16-r)^2\}.
\end{equation}
Hence the $U$-term in \eqref{eq:K12square} is minimized independently in
its six real components by \eqref{eq:dp}.  Direct substitution of the
finite coefficient sets above gives the complete distribution
\[
\begin{array}{c|c|r|l}
\text{coset}&\norm V_0^2&\text{number of }V&
 \text{values of the RHS of \eqref{eq:K12square}}\\\hline
h+L_3&2&4&4\cdot128\\
&6&32&8\cdot128+24\cdot192\\
&10&104&40\cdot128+36\cdot192+28\cdot256\\\hline
h+xh+L_3&4&16&12\cdot128+4\cdot320\\
&8&64&24\cdot128+32\cdot192+8\cdot256\\\hline
h+xh+x^2h+L_3&6&64&12\cdot128+52\cdot192
\end{array}
\]
The six rows exhaust the $140$, $80$, and $64$ possible $V$ in the three
cosets.  In every row the right-hand side is at least $128$, whereas a
norm-$2$ vector would require it to equal $64$.  Thus no new coset
contains a norm-$2$ vector, and $\min\widetilde L_3=4$.

The $2$-power extension does not change the ternary local type, so
$\widetilde L_3$ lies in the genus $II_{12}(3^{+6})$.  J\"urgens records
that the Coxeter--Todd lattice is the unique member of this genus with
minimum four \cite[Example~1.3.1(a), p.~21]{Juergens15}.  Hence
$\widetilde L_3\cong K_{12}$.
\end{proof}

The $A_2(4)^6$-frame of $L_3$ is not the standard $A_2(2)^6$-frame used in
the hexacode construction.  The two code descriptions therefore use
different frames.  Proposition~\ref{prop:L3K12} shows that $L_3$ is a
$2$-primary sublattice of $K_{12}$, while $K_{12}$ retains its classical
$A_2(2)^6$/hexacode presentation.

At the exceptional endpoint $\ell=1$,
$k=-2$, $b=-1+5i$, and $c=4$.  The vector $(-i,1)$ has norm two, so
$L_1$ is the known extremal $4$-dimensional member of the family.

\section*{Acknowledgments}
The author would like to thank Joseph Iosue for, many years ago, introducing
him to the connection between lattices, codes, and conformal field theory,
and Ansgar Burchards for, more recently, introducing him to the connection
between quantum codes, weight enumerators, lattices, and lattice theta
functions.  These conversations sparked the interest that led to this
project.  The author would also like to thank Carl Miller for introducing
him to learning with errors and the idea of lattice cryptography many years
ago, and Daniele Micciancio for a much more recent conversation about the
hardness of finding lattice minima that encouraged the author to learn more
about methods for proving lattice minima.

\section*{Declaration of generative AI and AI-assisted technologies}
The author used OpenAI ChatGPT in the preparation of this article, including
to carry out and check computations (sometimes together with Magma V2.29).  The author takes full responsibility for all
mathematical statements and proofs in this article.

\end{document}